\newcommand{\T}{\theta}
\renewcommand{\(}{\left( }
\renewcommand{\)}{\right) }
\renewcommand{\theequation}{\theequation. \arabic{equation}}
\numberwithin{equation}{section}
\newtheorem{thm}{Theorem}[section]
\newtheorem{cor}[thm]{Corollary}
\newtheorem{rem}[thm]{Remark}
\newtheorem{prop}[thm]{Proposition}
\newtheorem{defn}[thm]{Definition}
\def\squarebox#1{\hbox to #1{\hfill\vbox to #1{\vfill}}}
\begin{document}
\title[On the Hahn polynomials]
{ A $q$-extension of a partial differential equation and the Hahn polynomials}
\author{Zhi-Guo Liu}
%\footnote{Dedicated to Srinivasa Ramanujan on the occasion of his 125th birth anniversary}
\date{\today}
\address{East China Normal University, Department of Mathematics, 500 Dongchuan Road,
Shanghai 200241, P. R. China} \email{zgliu@math.ecnu.edu.cn;
liuzg@hotmail.com}
\thanks{The  author was supported in part by
the National Science Foundation of China}
\thanks{ 2010 Mathematics Subject Classifications :  05A30, 33D05, 33D15,  32A05 ,  32A10.}
\thanks{ Keywords: $q$-series, $q$-derivative, $q$-partial differential equations, $q$-integrals,  $q$-identities, analytic functions, Hahn polynomials}
\begin{abstract}
Using the theory of analytic functions of several complex variables, we prove that if an analytic
function in several variables satisfies a system of $q$-partial differential equations,
then,  it can be expanded in terms of the product of the homogeneous Hahn polynomials.
Some non-trivial applications of this expansion theorem to $q$-series are discussed.
\end{abstract}
\maketitle
%\tableofcontents
%%%%%%%%%%%%%%%%%%%%%%%%%%%%%%%%%%%%%%%%%%%%%%%%%%%%%%%%%%%%%%%%%%%%%%%%%%%%%%%%%%
\section{Introduction}
%%%%%%%%%%%%%%%%%%%%%%%%%%%%%%%%%%%%%%%%%%%%%%%%%%%%%%%%%%%%%%%%%%%%%%%%%%%%%%%%%%
If $f(x)$ is an analytic function at $x=0$ and $\alpha$ a nonzero complex number, then, $f(\alpha x+y)$ is
a two-variables analytic function of $x$ and $y$ in  a neighbourhood of $(0, 0) \in \mathbb{C}^2.$
There is no obvious way to find the $q$-extension of $f(\alpha x+y)$, but a direct computation shows that
$f(\alpha x+y)$ satisfies the partial differential equation $f_x=\alpha f_y.$

Conversely, in the proposition below, we prove that all the solutions of the partial differential equation $f_x=\alpha f_y$ have the form $f(\alpha x+y).$
Thus, finding the $q$-extension of the function $f(\alpha x+y)$ is equivalent to
finding the $q$-extension of the partial differential equation $f_x=\alpha f_y$.

The motivation of this paper is to find the $q$-extension of $f(\alpha x+y)$ through a $q$-partial differential equation.

In this paper, we use $\mathbb{C}$ to denote the set of all complex numbers and
$\mathbb{C}^k=\mathbb{C}\times \cdots \times \mathbb{C}$ the set of all $k$-dimensional
complex numbers.
\begin{prop} \label{ppmliu}
If $\alpha$ is a nonzero complex number and $f(x, y)$ is a two-variables analytic function in a neighbourhood
of $(0, 0) \in \mathbb{C}^2$, satisfying the partial differential equation
$f_x(x,y)=\alpha f_y(x, y)$ with $f(0, y)=f(y)$, then,  we have $f(x, y)=f(\alpha x+y).$
\end{prop}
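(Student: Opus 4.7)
The plan is to argue directly from the local power series expansion. Since $f(x,y)$ is analytic in a neighbourhood of $(0,0)$, we may write
\[
f(x,y)=\sum_{m,n\geq 0} a_{m,n}\, x^m y^n,
\]
with the series converging absolutely in some polydisc about the origin. The goal is to determine all $a_{m,n}$ from the PDE together with the boundary data $f(0,y)=f(y)$, and then to sum the resulting series.

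First I would substitute this expansion into $f_x=\alpha f_y$. Comparing coefficients of $x^m y^n$ on both sides yields the two-term recurrence
\[
(m+1)\,a_{m+1,n}=\alpha(n+1)\,a_{m,n+1}, \qquad m,n\geq 0.
\]
Iterating this recurrence $m$ times to push the first index down to $0$, a simple induction gives the closed form
\[
a_{m,n}=\alpha^m \binom{m+n}{m} a_{0,m+n}.
\]
Next, the boundary condition $f(0,y)=f(y)$ identifies $a_{0,k}$ with the $k$-th Taylor coefficient $c_k$ of the one-variable analytic function $f(y)=\sum_k c_k y^k$.

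Finally I would reassemble the series. Writing $k=m+n$ and regrouping, which is justified by absolute convergence in a neighbourhood of $(0,0)$, one obtains
\[
f(x,y)=\sum_{k\geq 0} c_k \sum_{m=0}^{k} \binom{k}{m}(\alpha x)^m y^{k-m}
      =\sum_{k\geq 0} c_k (\alpha x+y)^k = f(\alpha x+y),
\]
by the binomial theorem. There is no real obstacle here: the only mild point to be careful about is verifying that $(\alpha x+y)$ lies in the disc of convergence of $f(y)$ for $(x,y)$ sufficiently close to $(0,0)$, so that the final equality is a genuine identity of analytic functions rather than merely of formal series; this follows at once from continuity and the fact that $\alpha\cdot 0+0=0$.
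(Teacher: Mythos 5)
Your proposal is correct and is essentially the same power-series argument as the paper's: the paper expands $f(x,y)=\sum_n A_n(y)x^n$, derives $A_n(y)=\alpha^n A_0^{(n)}(y)/n!$ from the PDE, and concludes via the Taylor expansion of $f$ about $y$, while you carry out the identical computation at the level of the double series $\sum_{m,n}a_{m,n}x^m y^n$ and resum with the binomial theorem. The difference is purely organizational, and your attention to convergence (absolute convergence of the double series, and $\alpha x+y$ staying in the disc of convergence of $f$ for $(x,y)$ near the origin) is sound.
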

\begin{proof}
We use the method of power series  to solve the partial differential equation in the proposition.
Since $f$ is analytic at $(0, 0) \in \mathbb{C}^2$£¬ $f$ is an analytic function of $x$ at $x=0$.
Thus, we may assume that  near $(x, y)=(0, 0),$
\[
f(x, y)=\sum_{n=0}^\infty A_n(y)x^n.
\]
If this is substituted into $ f_x(x,y)=f_y(x, y)$, we immediately conclude that
\[
\sum_{n=1}^\infty n A_n(y)x^{n-1}= \alpha \sum_{n=0}^\infty A_{n}'(y)x^n.
\]
Equating the  coefficients of $y^n$ on both sides of the above equation,
we find that for each integer $n\ge 1, A_n(y)=\alpha {A_{n-1}'(y)}/{n}.$
By iteration, we deduce that $A_n(y)={{\alpha}^n A_0^{(n)}(y)}/{n!}.$  It is obvious that
$A_0(y)=f(0, y)=f(y).$ Thus, using the Taylor expansion, we immediately deduce that
$f(x, y)=f(\alpha x+y),$ which completes the proof of Proposition~\ref{ppmliu}.
\end{proof}

%%%%%%%%%%%%%%%%%%%%%%%%%%%%%%%%%%%%%%%%%%%%%%%%%%%%%%%%%%%%%%%%%%%%%%%%%%%%%%%%%%%%%
Now we introduce some $q$-notations. For $|q|<1$ and $a \in \mathbb{C}$, we define the $q$-shifted factorials as
\begin{equation*}
(a; q)_0=1,\quad (a; q)_n=\prod_{k=0}^{n-1}(1-aq^k), \quad (a;
q)_\infty=\prod_{k=0}^\infty (1-aq^k);
\end{equation*}
and for convenience, we also adopt the following compact notation for the multiple
$q$-shifted factorial:
\begin{equation*}
(a_1, a_2,...,a_m;q)_n=(a_1;q)_n(a_2;q)_n ... (a_m;q)_n,
\end{equation*}
where $n$ is an integer or $\infty$.

\begin{defn}\label{qddfn}
For any function $f(x)$ of one variable, the  $q$-derivative of $f(x)$
with respect to $x,$ is defined as
\begin{equation*}
\mathcal{D}_{q,x}\{f(x)\}=\frac{f(x)-f(qx)}{x},
\end{equation*}
and we further define  $\mathcal{D}_{q,x}^0  \{f\}=f,$ and for $n\ge 1$, $\mathcal{D}_{q, x}^n \{f\}=\mathcal{D}_{q, x}\{\mathcal{D}_{q, x}^{n-1}\{f\}\}.$
\end{defn}
\begin{defn}\label{qpde}
A $q$-partial derivative of a function of several variables is its $q$-derivative with respect to one of those variables, regarding other variables as constants. The $q$-partial derivative of a function $f$ with respect to the variable $x$ is denoted by $\partial_{q, x}\{f\}$.
\end{defn}

The Gaussian binomial coefficients also called the $q$-binomial coefficients are
the $q$-analogs of the binomial coefficients, which
are given by
\begin{equation*}
{n\brack k}_q=\frac{(q; q)_n}{(q; q)_k(q; q)_{n-k}}.
\end{equation*}
Now we introduce the definition of the Hahn polynomials $\Phi^{(\alpha)}_n(x|q)$
which were first studied by Hahn \cite{Hahn1, Hahn2},
and then by Al-Salam and Carlitz \cite{SalamCarlitz}.
\begin{defn} \label{ascpolydefn} The Hahn polynomials are defined by
\[
\Phi^{(\alpha)}_n(x|q)=\sum_{k=0}^n {n\brack k}_q (\alpha; q)_k x^k .
\]
\end{defn}
 For our purposes, in this paper we need the homogeneous Hahn polynomials, which can
be obtained from the Hahn polynomials easily.
\begin{defn} \label{bscpolydefn} The homogeneous Hahn polynomials are defined by
\[
\Phi^{(\alpha)}_n(x, y|q)=y^n\Phi^{(\alpha)}_n(x/y|q) =\sum_{k=0}^n {n\brack k}_q (\alpha; q)_k x^k y^{n-k}.
\]
\end{defn}

It is obvious that $\Phi^{(\alpha)}_n(x, 1|q)=\Phi^{(\alpha)}_n(x|q)$. If we set $\alpha=0$ in the homogeneous
Hahn polynomials, we can obtain the homogeneous Rogers-Szeg\H{o} polynomials $h_n(x, y|q)$
(see, for example \cite{LiuRamanujanP})
\[
h_n(x, y|q)=\sum_{k=0}^n {n\brack k}_q  x^k y^{n-k}.
\]
An important difference between the Hahn polynomials and
the homogeneous Hahn polynomials is that the later
satisfy the following $q$-partial differential equation,
which does not appear in the literature.
\begin{prop}\label{liupdepp} The homogeneous Hahn polynomials satisfy the
$q$-partial differential equation
\[
\partial_{q, x}\left\{\Phi^{(\alpha)}_n(x, y|q)\right\}=\partial_{q, y}\left\{\Phi^{(\alpha)}_n(x, y|q)-\alpha \Phi^{(\alpha)}_n(qx, y|q)\right\}.
\]
\end{prop}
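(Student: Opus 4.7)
The plan is to prove the identity by direct computation from the explicit series representation
\[
\Phi^{(\alpha)}_n(x, y|q)=\sum_{k=0}^n {n\brack k}_q (\alpha; q)_k x^k y^{n-k},
\]
applying the $q$-partial derivatives termwise and comparing coefficients. Since the $q$-derivative acts on monomials as $\partial_{q,x}\{x^k\}=(1-q^k)x^{k-1}$ and $\partial_{q,y}\{y^{n-k}\}=(1-q^{n-k})y^{n-k-1}$, both sides are polynomials of the same bidegree, so the entire proof reduces to matching coefficients of $x^{k-1}y^{n-k}$ (or equivalently $x^jy^{n-1-j}$) on the two sides.

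First I would apply $\partial_{q,x}$ to the series, obtaining
\[
\partial_{q,x}\bigl\{\Phi^{(\alpha)}_n(x,y|q)\bigr\}=\sum_{k=1}^{n}{n\brack k}_q(\alpha;q)_k(1-q^k)x^{k-1}y^{n-k},
\]
and then invoke the elementary $q$-binomial identity
\[
{n\brack k}_q(1-q^k)=(1-q^n){n-1\brack k-1}_q
\]
together with the reindexing $j=k-1$ to rewrite the sum as
\[
(1-q^n)\sum_{j=0}^{n-1}{n-1\brack j}_q(\alpha;q)_{j+1}x^jy^{n-1-j}.
\]

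Next I would compute the right-hand side. Applying $\partial_{q,y}$ to the two summands separately gives factors $(1-q^{n-k})y^{n-k-1}$ and $\alpha q^k (1-q^{n-k})y^{n-k-1}$ respectively, and using the companion identity
\[
{n\brack k}_q(1-q^{n-k})=(1-q^n){n-1\brack k}_q,
\]
the bracketed difference consolidates into
\[
(1-q^n)\sum_{k=0}^{n-1}{n-1\brack k}_q(\alpha;q)_k(1-\alpha q^k)x^ky^{n-1-k}.
\]
Finally I would invoke $(\alpha;q)_k(1-\alpha q^k)=(\alpha;q)_{k+1}$ to collapse the factor and match the expression derived from the left-hand side.

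There is no real obstacle here beyond careful bookkeeping; the only subtlety is spotting that the specific combination $\Phi^{(\alpha)}_n(x,y|q)-\alpha\Phi^{(\alpha)}_n(qx,y|q)$ on the right-hand side is engineered precisely so that the factor $(1-\alpha q^k)$ telescopes $(\alpha;q)_k$ into $(\alpha;q)_{k+1}$, matching the shift in the Pochhammer index created by differentiating in $x$. Everything else is a pair of standard $q$-binomial manipulations.
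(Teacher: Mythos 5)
Your proof is correct and follows essentially the same route as the paper: termwise $q$-differentiation of the defining series, absorption of $(1-\alpha q^k)$ into $(\alpha;q)_{k+1}$, and coefficient matching via an elementary $q$-binomial identity. The only cosmetic difference is that you normalize both sides through $(1-q^n){n-1\brack k}_q$, whereas the paper reindexes and uses the equivalent identity ${n\brack k}_q(1-q^k)={n\brack k-1}_q(1-q^{n-k+1})$ directly.
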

\begin{proof} Using the identity, $\partial_{q, x} \{ x^k\}=(1-q^k) x^{k-1},$  we immediately find that
\[
\partial_{q, x}\left\{\Phi^{(\alpha)}_n(x, y|q)\right\}
=\sum_{k=1}^n {n\brack k}_q (\alpha; q)_k (1-q^k) x^{k-1} y^{n-k}.
\]
In the same way, using the identity, $\partial_{q, y} \{ y^{n-k}\}=(1-q^{n-k}) y^{n-k-1},$ we deduce that
\[
\partial_{q, y}\left\{\Phi^{(\alpha)}_n(x, y|q)-\alpha \Phi^{(\alpha)}_n(qx, y|q)\right\}
=\sum_{k=0}^{n-1} {n\brack k}_q (\alpha; q)_{k+1} (1-q^{n-k}) x^{k} y^{n-k-1}.
\]
If we make the variable change $k+1 \to k$ in the right-hand side of the above equation, we can find that
\[
\partial_{q, y}\left\{\Phi^{(\alpha)}_n(x, y|q)-\alpha \Phi^{(\alpha)}_n(qx, y|q)\right\}
=\sum_{k=1}^{n} {n\brack {k-1}}_q (\alpha; q)_{k} (1-q^{n-k+1}) x^{k-1} y^{n-k}.
\]
From the definition of the $q$-binomial coefficients, it is easy to verify that
\[
{n\brack k}_q (1-q^k)={n\brack {k-1}}_q  (1-q^{n-k+1}).
\]
Combining the above equations, we complete the proof of Proposition~\ref{liupdepp}.
\end{proof}
\begin{defn} \label{qshiftdefn} If $f(x_1, \ldots, x_k)$ is a $k$-variable function,
then,  the $q$-shift operator $\eta_{x_j}$ on the variable $x_j$ is defined as
\[
\eta_{x_j}\{f(x_1, \ldots, x_k)\}=f(x_1, \ldots, x_{j-1},  qx_j, x_{j+1}, \ldots, x_k)
~\text{for}~j=1, 2, \ldots,  k.
\]
\end{defn}

The principal result of this paper is the following theorem for the analytic functions in several variables.

\begin{thm}\label{principalliuhthm}
If $f(x_1,y_1, \ldots, x_k, y_k)$  is a $2k$-variable
 analytic function at $(0,0, \cdots, 0)\in \mathbb{C}^{2k}$, then,
 $f$ can be expanded in terms of
 \[
 \Phi^{(\alpha_1)}_{n_1}(x_1, y_1|q)\cdots \Phi^{(\alpha_k)}_{n_k}(x_k, y_k|q)
 \]
 if and only if  $f$ satisfies the $q$-partial differential equations
 \[
 \partial_{q, x_j}\{f\}=\partial_{q, y_j}(1-\alpha_j \eta_{x_j})\{f\}~\text{for}~j=1, 2, \ldots,  k.
 \]
\end{thm}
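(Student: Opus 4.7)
The direction that the Hahn-polynomial expansion implies the system of $q$-PDEs is immediate from Proposition~\ref{liupdepp}. The $j$-th $q$-PDE involves only the operators $\partial_{q,x_j}$, $\partial_{q,y_j}$, and $\eta_{x_j}$, each of which acts solely on the coordinate pair $(x_j,y_j)$. By Proposition~\ref{liupdepp} each factor $\Phi^{(\alpha_j)}_{n_j}(x_j,y_j|q)$ satisfies the $j$-th equation, so every product $\prod_{j=1}^k \Phi^{(\alpha_j)}_{n_j}(x_j,y_j|q)$ does as well, and by linearity so does any convergent series of such products.

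For the converse, I would first settle $k=1$ by a $q$-analogue of the proof of Proposition~\ref{ppmliu}. Since $f$ is analytic at $(0,0)$, expand $f(x,y)=\sum_{n\ge 0}A_n(y)x^n$. The identities $\partial_{q,x}\{x^n\}=(1-q^n)x^{n-1}$ and $\eta_x\{x^n\}=q^n x^n$ convert the $q$-PDE into the recursion
\[
(1-q^{n+1})A_{n+1}(y)=(1-\alpha q^n)\,\partial_{q,y}\{A_n(y)\},
\]
which iterates to $A_n(y)=\frac{(\alpha;q)_n}{(q;q)_n}\partial_{q,y}^n\{f(0,y)\}$. Writing $f(0,y)=\sum_{m\ge 0} c_m y^m$ and using $\partial_{q,y}^n\{y^m\}=\frac{(q;q)_m}{(q;q)_{m-n}}y^{m-n}$, a switch in the order of summation collapses the resulting double series into $\sum_{N\ge 0} c_N\,\Phi^{(\alpha)}_N(x,y|q)$. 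This rearrangement reproduces the bivariate Taylor expansion of $f$ coefficient-by-coefficient in $x^iy^j$, so its convergence in a neighbourhood of $(0,0)$ is inherited from the analyticity of $f$.

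For general $k$, I would induct on $k$. Treating $(x_k,y_k)$ as the active pair and the other $2(k-1)$ variables as complex parameters, the $k=1$ argument produces
\[
f=\sum_{n_k\ge 0} B_{n_k}(x_1,y_1,\ldots,x_{k-1},y_{k-1})\,\Phi^{(\alpha_k)}_{n_k}(x_k,y_k|q),
\]
with $B_{n_k}$ analytic at the origin of $\mathbb{C}^{2(k-1)}$, being (up to a constant) an iterated $q$-derivative of the slice $f|_{x_k=0}$. Because the operators appearing in the remaining $k-1$ equations commute with everything acting only on $(x_k,y_k)$, equating the coefficients of $\Phi^{(\alpha_k)}_{n_k}(x_k,y_k|q)$ forces each $B_{n_k}$ to satisfy the remaining $k-1$ $q$-PDEs, and the inductive hypothesis finishes the expansion. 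The main obstacle throughout is the analytic bookkeeping: one must check at each stage that the coefficient functions are analytic at the appropriate origin and that the final $k$-fold indexed series converges on a common polydisk. This should reduce to Cauchy estimates on the multivariate Taylor coefficients of $f$ combined with polynomial growth bounds on $\Phi^{(\alpha)}_n(x,y|q)$ on sufficiently small polydisks.
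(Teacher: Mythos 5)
Your proposal is correct and follows essentially the same route as the paper: the easy direction via Proposition~\ref{liupdepp}, the $k=1$ case by expanding in powers of $x$, converting the $q$-PDE into the recursion $A_n(y)=\frac{(\alpha;q)_n}{(q;q)_n}\partial_{q,y}^n\{f(0,y)\}$ and resumming into $\sum_N c_N\Phi^{(\alpha)}_N(x,y|q)$, and then induction on $k$ by peeling off one coordinate pair, checking analyticity of the coefficient functions from the $x_j=0$ slice, and equating coefficients of the Hahn polynomials to pass the remaining equations to the coefficients. The only differences are cosmetic (you peel off $(x_k,y_k)$ instead of $(x_1,y_1)$, and you flag the convergence bookkeeping that the paper handles by appealing to absolute convergence of the multivariate power series).
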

\begin{prop}\label{qliuextension}
The case $k=1$ of Theorem~\ref{principalliuhthm} is a $q$-extension of Proposition~\ref{ppmliu}.
\end{prop}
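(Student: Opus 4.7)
The plan is to obtain Proposition~\ref{ppmliu} as the formal $q \to 1^-$ limit of the $k=1$ case of Theorem~\ref{principalliuhthm}. Two parallel limits need to be checked: one for the Hahn basis $\Phi^{(\alpha)}_n(x,y|q)$, and one for the $q$-partial differential equation.

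First, I would pass to the limit in the expansion basis. Since ${n\brack k}_q \to \binom{n}{k}$ and $(\alpha;q)_k = \prod_{j=0}^{k-1}(1-\alpha q^j) \to (1-\alpha)^k$ as $q \to 1^-$, Definition~\ref{bscpolydefn} gives termwise
\[
\lim_{q\to 1^-}\Phi^{(\alpha)}_n(x, y|q) = \sum_{k=0}^n \binom{n}{k}(1-\alpha)^k x^k y^{n-k} = \bigl((1-\alpha)x + y\bigr)^n.
\]
Consequently a Hahn expansion $f(x,y) = \sum_n c_n \Phi^{(\alpha)}_n(x,y|q)$ degenerates into $f(x,y) = \sum_n c_n \bigl((1-\alpha)x + y\bigr)^n = g\bigl((1-\alpha)x + y\bigr)$ with $g(t) = \sum_n c_n t^n$ analytic at $0$.

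Next, I would rescale the $q$-partial differential equation by $(1-q)^{-1}$. Using $(g(x) - g(qx))/(x(1-q)) \to g'(x)$, one sees directly that $(1-q)^{-1}\partial_{q,x}\{f\} \to f_x$, and expanding
\[
\frac{\partial_{q,y}(1-\alpha\eta_{x})\{f\}}{1-q} = \frac{f(x,y) - f(x,qy)}{y(1-q)} - \alpha\,\frac{f(qx,y) - f(qx,qy)}{y(1-q)},
\]
passing to $q \to 1^-$ yields $f_x = (1-\alpha)f_y$, since $qx \to x$ in the second difference quotient.

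Combining the two limits, the $k=1$ case of Theorem~\ref{principalliuhthm} degenerates as $q \to 1^-$ into the statement that an analytic $f$ at $(0,0)$ satisfies $f_x = (1-\alpha) f_y$ if and only if $f(x,y) = g\bigl((1-\alpha)x + y\bigr)$ for some analytic $g$ with $g(y) = f(0,y)$. Setting $\beta = 1-\alpha$ (nonzero whenever $\alpha \neq 1$) recovers exactly Proposition~\ref{ppmliu} with parameter $\beta$, so the $k=1$ theorem is indeed a $q$-extension. The only delicate point is bookkeeping: one must track the factor $(1-q)^{-1}$ consistently on both sides of the $q$-PDE and note that termwise convergence of the Hahn polynomials is uniform on compact sets near the origin, so the limit commutes with the summation in the expansion.
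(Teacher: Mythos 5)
Your proposal is correct, and it takes a genuinely different (and in one respect more careful) route than the paper. The paper's own proof only looks at the $q$-difference equation: it replaces $\alpha_1$ by $q^{\alpha_1}$, divides by $1-q$, lets $q\to 1$, and asserts that this recovers $f_{x_1}=\alpha_1 f_{y_1}$; it never discusses the degeneration of the expansion basis. You instead keep $\alpha$ fixed, observe that $\Phi^{(\alpha)}_n(x,y|q)\to\bigl((1-\alpha)x+y\bigr)^n$ termwise, and show that $(1-q)^{-1}$ times the $q$-equation tends to $f_x=(1-\alpha)f_y$, so the $k=1$ case degenerates structurally to Proposition~\ref{ppmliu} with parameter $\beta=1-\alpha$ (you rightly exclude $\alpha=1$, where $\Phi^{(1)}_n(x,y|q)=y^n$ and everything trivializes). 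This normalization is arguably the sounder one: under the paper's substitution the operand $f(x,y)-q^{\alpha_1}f(qx,y)$ is already $O(1-q)$, so $\partial_{q,y_1}(1-q^{\alpha_1}\eta_{x_1})\{f\}$ is $O\bigl((1-q)^2\bigr)$, and dividing by $1-q$ gives the degenerate limit $f_x=0$ rather than $f_x=\alpha_1 f_y$; your fixed-$\alpha$ limit avoids this pitfall and has the added merit of matching the classical limit of the Hahn basis, so both halves of Theorem~\ref{principalliuhthm} visibly degenerate to the two halves of Proposition~\ref{ppmliu}. Two caveats keep your argument at (still acceptable) heuristic level, the same level as the paper's: the interchange of the $q\to1$ limit with the infinite Hahn expansion would require control of how the coefficients depend on $q$, which neither you nor the paper establishes, and taking a limit of an equivalence is a formal degeneration rather than a literal deduction of Proposition~\ref{ppmliu}, which the paper in any case proves independently.
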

\begin{proof}
When $k=1$, the $k$ partial differential equations in Theorem~\ref{principalliuhthm} reduce to one  partial differential equation, $\partial_{q, x_1}\{f\}=\partial_{q, y_1}(1-\alpha_1 \eta_{x_1})\{f\}$.
Replacing $\alpha_1$ by $q^{\alpha_1}$,  dividing both sides by $1-q$ and finally letting $q\to 1,$ we find that
$f_{x_1}=\alpha_1 f_{y_1}$, which is the same partial differential equation as that in Proposition~\ref{ppmliu}
after writing  $(x_1, y_1, \alpha_1)$ by $(x, y, \alpha)$. Thus, the case $k=1$ of Theorem~\ref{principalliuhthm} is a $q$-extension of Proposition~\ref{ppmliu}.
\end{proof}
\begin{rem}\label{remliua}
From Proposition~\ref{qliuextension}, we know that a function is a $q$-extension of $f(\alpha x+y)$ if
it can be expanded in terms of  $\Phi^{(\alpha)}_{n}(x, y|q)$.
\end{rem}

Theorem~\ref{principalliuhthm} is quite useful in $q$-series, which allows us to derive some deep $q$-formulas.
For example, with this theorem, we can prove the following curious $q$-formula:
\begin{align*}
&\frac{(q; q)_\infty}{2\pi}\int_{0}^{\pi}
\frac{h(\cos 2\theta; 1) }
{h(\cos \theta; a, b, c, u, v)} {_3\phi_2}\({{\alpha, ve^{i\theta}, ve^{-i\theta}}\atop{\alpha dv, uv}}; q, du\) d\theta\\
&=\frac{(dv; q)_\infty}
{(1-q)v(q, u/v, qv/u, \alpha dv, uv, ab, ac, bc; q)_\infty}\\
&\qquad\times{\int_u^v \frac{(qx/u, qx/v, abcx, \alpha dx; q)_\infty}{(ax, bx, cx, dx; q)_\infty}d_q x},
\end{align*}
where $\max\{|a|, |b|, |c|, |d|, |u|, |v|\}<1$, which includes the Nassrallah-Rahman integral as a special case.
This paper is organized as follows. Section~2 is devoted the proof of  Theorem~\ref{principalliuhthm}. In Section~3,
we will use Theorem~\ref{principalliuhthm} to prove the $q$-Mehler formula for the homogeneous Hahn polynomials.
Some $q$-integrals are discussed in Sections~4 and 5. A multilinear generating function for the Hahn polynomials is established in Section~6.
A proof of the Srivastava-Jain formula for the Hahn polynomials is given in Section~7.

%%%%%%%%%%%%%%%%%%%%%%%%%%%%%%%%%%%%%%%%%%%%%%%%%%%%%%%%%%%%%%%%%%%%%%%%%%%%%%%%%%%%%
\section{the proof of Theorem~\ref{principalliuhthm}}
%%%%%%%%%%%%%%%%%%%%%%%%%%%%%%%%%%%%%%%%%%%%%%%%%%%%%%%%%%%%%%%%%%%%%%%%%%%%%%%%%%%%%%%
To prove Theorem~\ref{principalliuhthm},  we need the following fundamental property of
several complex variables (see, for example \cite[p. 5, Proposition~ 1]{Malgrange}, \cite[p. 90]{Range}).
\begin{prop}\label{mcvarapp}
If $f(x_1, x_2, \ldots,  x_k)$ is analytic at the origin $(0, 0, \ldots,  0)\in \mathbb{C}^k$, then,
$f$ can be expanded in an absolutely convergent power series,
 \[
 f(x_1, x_2, \ldots,  x_k)=\sum_{n_1, n_2, \ldots, n_k=0}^\infty \lambda_{n_1, n_2, \ldots, n_k}
 x_1^{n_1} x_2^{n_2}\cdots x_k^{n_k}.
 \]
\end{prop}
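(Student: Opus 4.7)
The plan is to derive this standard fact about several complex variables from the multivariable Cauchy integral formula on a polydisk, by expanding the Cauchy kernel as an iterated geometric series. Since $f$ is analytic at the origin in $\mathbb{C}^k$, there exist radii $r_1, \ldots, r_k > 0$ such that $f$ is holomorphic on an open neighbourhood of the closed polydisk $\overline{\Delta}=\{(w_1, \ldots, w_k): |w_j|\le r_j, \ j=1,\ldots, k\}$. On the open polydisk $\Delta^{\circ}=\{|x_j|<r_j\}$, the iterated Cauchy integral formula gives
\[
f(x_1, \ldots, x_k)= \frac{1}{(2\pi i)^k} \oint_{|w_1|=r_1}\!\!\!\cdots\oint_{|w_k|=r_k} \frac{f(w_1,\ldots, w_k)}{(w_1-x_1)\cdots (w_k-x_k)}\, dw_1\cdots dw_k.
\]

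For fixed $(x_1, \ldots, x_k)\in \Delta^{\circ}$, each kernel factor admits the geometric expansion $(w_j-x_j)^{-1}=\sum_{n_j\ge 0} x_j^{n_j}/w_j^{n_j+1}$, which converges uniformly for $|w_j|=r_j$ because $|x_j/w_j|=|x_j|/r_j<1$. Taking the product of these $k$ single-variable expansions yields an absolutely and uniformly convergent $k$-fold series on the distinguished boundary $|w_1|=r_1,\ldots,|w_k|=r_k$. Substituting this expansion into the Cauchy representation and interchanging the order of summation and integration---justified by uniform convergence of the series against the bounded continuous function $f$ on the compact polytorus---gives the desired power series representation with coefficients
\[
\lambda_{n_1, \ldots, n_k}=\frac{1}{(2\pi i)^k} \oint_{|w_1|=r_1}\!\!\!\cdots\oint_{|w_k|=r_k} \frac{f(w_1,\ldots, w_k)}{w_1^{n_1+1}\cdots w_k^{n_k+1}}\, dw_1\cdots dw_k.
\]

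Setting $M=\sup_{\overline{\Delta}}|f|$ and applying the trivial $ML$-estimate yields the Cauchy-type bound $|\lambda_{n_1,\ldots, n_k}|\le M r_1^{-n_1}\cdots r_k^{-n_k}$, from which the absolute convergence of $\sum_{n_1,\ldots, n_k\ge 0} |\lambda_{n_1,\ldots, n_k}|\,|x_1|^{n_1}\cdots |x_k|^{n_k}$ on any strictly smaller polydisk $|x_j|\le r_j'<r_j$ follows by comparison with a product of geometric series, giving absolute convergence throughout $\Delta^{\circ}$. The main technical obstacle in this outline is the first step: producing the multivariable Cauchy integral representation on a polydisk. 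This requires joint holomorphy of $f$ on a polydisk neighbourhood of the origin; if only separate analyticity in each variable were known, one would first invoke Hartogs' theorem to upgrade to joint holomorphy before the iterated Cauchy kernel manipulation is legitimate. Once that is granted, everything else is routine bookkeeping with absolutely convergent series.
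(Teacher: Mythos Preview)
Your argument is correct and is the standard textbook derivation of the multivariable Taylor expansion from the iterated Cauchy integral formula on a polydisk. The paper, however, does not give any proof of this proposition at all: it is stated as a known ``fundamental property of several complex variables'' and simply referenced to Malgrange and Range. So there is nothing to compare against---you have supplied a proof where the paper merely cites one. Your remark about needing joint holomorphy (and the role of Hartogs' theorem if only separate analyticity were assumed) is a sensible caveat, though in the paper's setting ``analytic at the origin'' is taken in the joint sense from the outset.
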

Now we begin to prove Theorem~\ref{principalliuhthm} by using Proposition~\ref{mcvarapp}.
\begin{proof} The theorem can be proved  by induction. We first prove the theorem for the case
$k=1.$
Since $f$ is analytic at $(0, 0),$ from Proposition~\ref{mcvarapp}, we know that $f$ can be expanded in
an absolutely convergent power series in a neighborhood of $(0, 0)$. Thus there exists a sequence $\lambda_{m, n}$
independent of $x_1$ and $y_1$ such that
\begin{equation}
 f(x_1, y_1)=\sum_{m, n=0}^\infty \lambda_{m, n} x_1^my_1^n=\sum_{m=0}^\infty x_1^m \left\{\sum_{n=0}^\infty \lambda_{m, n} y_1^n\right\}.
 \label{liu:eqn1}
\end{equation}
Substituting this into the $q$-partial differential equation,
$\partial_{q, x_1}\{f(x_1, y_1)\}=\partial_{q, y_1}\{f(x_1, y_1)-\alpha f(qx_1, y_1)\},$
and using the fact,  $\partial_{q, x_1}\{x_1^m\}=(1-q^m)x_1^{m-1},$ we find that
\[
\sum_{m=1}^\infty (1-q^m)x_1^{m-1} \sum_{n=0}^\infty \lambda_{m, n} y_1^n
=\sum_{m=0}^\infty (1-\alpha q^m)x_1^m  \partial_{q, y_1} \left\{\sum_{n=0}^\infty \lambda_{m, n} y_1^n\right\}.
\]
Equating the coefficients of $x_1^{m-1}$ on both sides of the above equation, we easily deduce that
\[
\sum_{n=0}^\infty \lambda_{m, n} y_1^n=\frac{1-\alpha q^{m-1}}{1-q^m} \partial_{q, y_1}\left\{\sum_{n=0}^\infty \lambda_{m-1, n} y_1^n\right\}.
\]
Iterating the above recurrence relation $(m-1)$ times, we conclude that
\[
\sum_{n=0}^\infty \lambda_{m, n} y_1^n=\frac{(\alpha; q)_m}{(q; q)_m} \partial^n_{q, y_1}\left\{\sum_{n=0}^\infty \lambda_{0, n} y_1^n\right\}.
\]
With the help of  the identity,  ${\partial}^m_{q, y_1}\{y_1^n\}=(q; q)_n y_1^{n-m}/(q; q)_{n-m}$, we obtain
\[
\sum_{n=0}^\infty \lambda_{m, n} y_1^n=(\alpha; q)_m\sum_{n=m}^\infty \lambda_{0, n}{n\brack m}_q y_1^{n-m}.
\]
Noting that the series in (\ref{liu:eqn1}) is absolutely convergent, substituting the above equation
into (\ref{liu:eqn1})  and interchanging the order of the summation, we deduce that
\[
f(x_1, y_1)=\sum_{n=0}^\infty \lambda_{0, n} \sum_{m=0}^n {n \brack m}_q (\alpha; q)_m x_1^n y_1^{n-m}
=\sum_{n=0}^\infty \lambda_{0, n} \Phi_n^{(\alpha)}(x_1, y_1|q).
\]
Conversely, if $f(x_1, y_1)$ can be expanded in terms of $\Phi_n^{(\alpha)}(x_1, y_1|q), $
then using Proposition~\ref{liupdepp},  we find that
 $
 \partial_{q, x_1}\{f(x_1, y_1)\}=\partial_{q, y_1}\{f(x_1, y_1)-\alpha f(qx_1, y_1)\}.
 $
We conclude that Theorem~\ref{principalliuhthm} holds when $k=1$.

Now, we assume that the theorem is true for the case $k-1$ and consider the case $k$.
If we regard $f(x_1, y_1, \ldots, x_k, y_k)$ as a function of $x_1$ and $y_1, $ then $f$ is analytic at $(0, 0)$ and satisfies
$\partial_{q, x_1}\{f\}=\partial_{q, y_1}(1-\alpha_1 \eta_{x_1})\{f\}.$ Thus,  there exists a sequence
$\{c_{n_1}(x_2, y_2, \ldots, x_k, y_k)\}$ independent of $x_1$ and $y_1$ such that
\begin{equation}
f(x_1, y_1, \ldots, x_k, y_k)=\sum_{n_1=0}^\infty c_{n_1}(x_2, y_2, \ldots, x_k, y_k)\Phi^{(\alpha_1)}_{n_1}(x_1, y_1|q).
\label{liu:eqn2}
\end{equation}
Setting $x_1=0$ in the above equation and using $=\Phi^{(\alpha_1)}_{n_1}(0, y_1|q)=y_1^{n_1},$ we obtain
\[
f(0, y_1, x_2, y_2,  \ldots, x_k, y_k)=\sum_{n_1=0}^\infty c_{n_1}(x_2, y_2, \ldots, x_k, y_k)y_1^{n_1}.
\]
Using the Maclaurin expansion theorem, we immediately deduce that
\[
c_{n_1}(x_2, y_2, \ldots, x_k, y_k)=\frac{\partial^{n_1} f(0, y_1, x_2, y_2,  \ldots, x_k, y_k)}{n_1! \partial {y_1}^{n_1}}\Big|_{y_1=0}
\]
Since $f(x_1, y_1, \ldots, x_k, y_k)$ is analytic near $(x_1, y_1, \ldots, x_k, y_k)=(0, \ldots, 0)\in \mathbb{C}^{2k},$  from
the above equation, we know that $c_{n_1}(x_2, y_2, \ldots, x_k, y_k)$ is analytic near $(x_2, y_2, \ldots, x_k, y_k)=(0, \ldots, 0)\in \mathbb{C}^{2k-2}.$   Substituting (\ref{liu:eqn2}) into the $q$-partial equations in Theorem~\ref{principalliuhthm}.
we find, for $j=2, \ldots k$, that
\begin{align*}
&\sum_{n_1=0}^\infty \partial_{q, x_j}\{c_{n_1}(x_2, y_2, \ldots, x_k, y_k)\} \Phi^{(\alpha_1)}_{n_1}(x_1, y_1|q)\\
&=\sum_{n_1=0}^\infty \partial_{q, y_j}(1-\alpha_j \eta_{x_j})\{c_{n_1}(x_2, y_2, \ldots, x_k, y_k)\} \Phi^{(\alpha_1)}_{n_1}(x_1, y_1|q).
\end{align*}
By equating the coefficients of $\Phi^{(\alpha_1)}_{n_1}(x_1, y_1|q)$ in the above equation, we find that
for $j=2, \ldots,  k,$
\[
\partial_{q, x_j}\{c_{n_1}(x_2, y_2, \ldots, x_k, y_k)\}
=\partial_{q, y_j}(1-\alpha_j \eta_{x_j})\{c_{n_1}(x_2, y_2, \ldots, x_k, y_k)\}.
\]
Thus by the inductive hypothesis, there exists a sequence $\lambda_{n_1, n_2, \ldots, n_k}$ independent of
$x_2, y_2, \ldots, x_k, y_k$ (of course independent of $x_1$ and $y_1$) such that
\[
c_{n_1}(x_2, y_2, \ldots, x_k, y_k)=\sum_{n_2, \ldots, n_k=0}^\infty \alpha_{n_1, n_2, \ldots, n_k}
\Phi^{(\alpha_2)}_{n_2}(x_2, y_2|q)\ldots \Phi^{(\alpha_k)}_{n_k}(x_k, y_k|q).
\]
Substituting this equation into (\ref{liu:eqn2}), we find that $f$ can be expanded
in terms of $\Phi^{(\alpha_1)}_{n_1}(x_1, y_1|q)\cdots \Phi^{(\alpha_k)}_{n_k}(x_k, y_k|q).$

Conversely, if $f$ can be expanded in terms of $\Phi^{(\alpha_1)}_{n_1}(x_1, y_1|q)\cdots \Phi^{(\alpha_k)}_{n_k}(x_k, y_k|q),$
then using Proposition~\ref{liupdepp},  we find that $\partial_{q, x_j}\{f\}=\partial_{q, y_j}(1-\alpha_j \eta_{x_j}\{f\}$ for
$j=1, 2, \ldots, k.$ This completes the proof of Theorem~~\ref{principalliuhthm}.
\end{proof}
To determine if a given function is an analytic functions in several complex variables,
we often use the following theorem (see, for example, \cite[p. 28]{Taylor}).
\begin{thm}\label{hartogthm} {\rm (Hartogs' theorem).}
If a complex valued function $f(z_1, z_2, \ldots, z_n)$ is holomorphic (analytic) in each variable separately in a domain $U\in\mathbb{C}^n,$
then,  it is holomorphic (analytic) in $U.$
\end{thm}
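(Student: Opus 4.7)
The plan is to reduce to the two-variable case by induction on $n$, and then to establish joint analyticity in two stages: first, a Baire category argument yielding local boundedness, and second, an iterated Cauchy integral representation giving joint holomorphy on a subdomain. Work on a polydisc $\Delta_1\times\Delta_2\subset U\subset\C^2$, and fix a concentric sub-polydisc $\Delta_1\times\overline{\Delta_2'}$ with $\overline{\Delta_2'}$ compactly contained in $\Delta_2$.

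The heart of the argument is local boundedness. For each fixed $z_1\in\Delta_1$, the slice $z_2\mapsto f(z_1,z_2)$ is holomorphic on $\Delta_2$, hence bounded on $\overline{\Delta_2'}$. Define
$$E_N=\{\,z_1\in\Delta_1 : |f(z_1,z_2)|\le N \text{ for all } z_2\in\overline{\Delta_2'}\,\}.$$
Because $z_1\mapsto f(z_1,z_2)$ is continuous in $z_1$ for each fixed $z_2$, each $E_N$ is closed, and by the slice boundedness just noted $\bigcup_N E_N=\Delta_1$. The Baire category theorem then furnishes some $N$ for which $E_N$ has non-empty interior, so $f$ is bounded on an open set of the form $V\times\overline{\Delta_2'}$ with $V\subset\Delta_1$ open. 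With this uniform bound, Cauchy's formula in the second variable,
$$f(z_1,z_2)=\frac{1}{2\pi i}\oint_{|\zeta|=r}\frac{f(z_1,\zeta)}{\zeta-z_2}\,d\zeta,$$
shows that $f$ is jointly continuous on $V\times\Delta_2''$ for a slightly shrunken disc $\Delta_2''$. Combining joint continuity with separate holomorphy, one applies Osgood's lemma: the iterated Cauchy kernel yields a convergent double power series, so $f$ is jointly analytic on $V\times\Delta_2''$.

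The main obstacle is propagating joint analyticity from this small neighborhood to all of $U$. The standard device is a Hartogs-figure argument: the set of points of $U$ at which $f$ is jointly analytic is open by construction, and is shown to be closed using the Hartogs extension theorem applied to suitable analytic continuations along one-variable slices. Connectedness of $U$ then forces joint analyticity throughout $U$, and induction on $n$, applied pair-by-pair to the coordinates, delivers the general case. The genuinely delicate step is the Baire category reduction in stage one, since separate holomorphy alone gives no \emph{a priori} uniform bound; every subsequent step rests on the local boundedness extracted at that point.
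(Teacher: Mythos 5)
You should first note that the paper does not prove this statement at all: Hartogs' theorem is quoted as a known classical result with a reference to Taylor's book, so there is no in-paper argument to compare against, and your sketch has to be judged against the standard proof in the literature. Your first stage is exactly that standard opening and is essentially correct: the sets $E_N$ are closed because each slice $z_1\mapsto f(z_1,z_2)$ is continuous, they exhaust $\Delta_1$ because each slice $z_2\mapsto f(z_1,z_2)$ is bounded on the compact set $\overline{\Delta_2'}$, Baire gives boundedness on some $V\times\overline{\Delta_2'}$, and Osgood's lemma (local boundedness plus separate holomorphy) gives joint analyticity on $V\times\Delta_2''$.

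The genuine gap is in the propagation step, which is where the real content of Hartogs' theorem lies. The claim that the set of points of joint analyticity is closed in $U$, ``shown using the Hartogs extension theorem,'' is not an argument: if $W\subset U$ is the open set where $f$ is known to be jointly analytic and $p\in\partial W\cap U$, nothing guarantees a Hartogs figure (or any configuration to which the Kugelsatz applies) sitting inside $W$ around $p$, so closedness simply does not follow, and an open--closed--connected argument cannot be run this way. The standard, and apparently unavoidable, ingredient you are missing is the Hartogs lemma on subharmonic functions: expand $f(z_1,z_2)=\sum_k c_k(z_1)(z_2-a)^k$ about a point $a$ of the good region, note that $c_k(z_1)=\frac{1}{2\pi i}\oint_{|\zeta-a|=r}f(z_1,\zeta)(\zeta-a)^{-k-1}\,d\zeta$ (circle inside the good region) is holomorphic in $z_1$ on $\Delta_1$ and satisfies $|c_k|\le N r^{-k}$ there, while for each fixed $z_1$ separate holomorphy gives $\limsup_k|c_k(z_1)|^{1/k}\le 1/R$ with $R$ the distance from $a$ to $\partial\Delta_2$; applying Hartogs' lemma to the subharmonic functions $\frac1k\log|c_k(z_1)|$ upgrades this pointwise bound to a locally uniform one, so the series converges locally uniformly on $\Delta_1\times D(a,R)$ and $f$ is jointly analytic there. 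Covering $U$ by such polydiscs and inducting on $n$ (splitting off the last variable, not ``pair-by-pair'') completes the proof. Without this step, or a substitute of comparable strength, your outline establishes only Osgood's theorem plus analyticity on a thin slab, not Hartogs' theorem.
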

%%%%%%%%%%%%%%%%%%%%%%%%%%%%%%%%%%%%%%%%%%%%%%%%%%%%%%%%%%%%%%%%%%%%%%%%%%%%%%%%%%%%%%%%%%%%%%%%%
\section{Generating functions for the homogeneous Hahn polynomials }
%%%%%%%%%%%%%%%%%%%%%%%%%%%%%%%%%%%%%%%%%%%%%%%%%%%%%%%%%%%%%%%%%%%%%%%%%%%%%%%%%%%%%%%%%%%%%%%%%
In this section we use Theorem~~\ref{principalliuhthm} to prove the
following two known results to explain our method.
\begin{thm} \label{genthma}If $\max\{|xt|, |yt|\}<1,$ then,  we have
\begin{equation}
\sum_{n=0}^\infty \Phi_n^{(\alpha)} (x, y|q) \frac{t^n}{(q; q)_n}
=\frac{(\alpha xt; q)_\infty}{(xt, yt; q)_\infty}.
\label{genfun:eqn1}
\end{equation}
\end{thm}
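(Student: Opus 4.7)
The plan is to apply Theorem~\ref{principalliuhthm} with $k=1$ to the right-hand side, viewed as a function of $(x,y)$ with $t$, $\alpha$, $q$ as parameters. Let
\[
F(x,y)=\frac{(\alpha xt;q)_\infty}{(xt,yt;q)_\infty}.
\]
Under the hypothesis $\max\{|xt|,|yt|\}<1$, the infinite products defining $F$ converge to nonzero analytic factors and the denominator has no zeros, so $F$ is analytic at $(0,0)\in\mathbb{C}^2$. Hence Theorem~\ref{principalliuhthm} applies as soon as I check the one relevant $q$-partial differential equation.

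The first main step is to verify the $q$-PDE
\[
\partial_{q,x}\{F\}=\partial_{q,y}(1-\alpha\eta_x)\{F\}.
\]
I would do this by using the shift identities
$(\alpha xt;q)_\infty=(1-\alpha xt)(\alpha qxt;q)_\infty$ and $(xt;q)_\infty=(1-xt)(qxt;q)_\infty$ to factor out common pieces. A short computation gives
\[
\partial_{q,x}\{F\}=\frac{t(1-\alpha)(\alpha qxt;q)_\infty}{(xt,yt;q)_\infty},
\]
and similarly, combining the same shift identities on the $x$-side first yields
\[
(1-\alpha\eta_x)\{F\}=\frac{(1-\alpha)(\alpha qxt;q)_\infty}{(xt,yt;q)_\infty},
\]
after which applying $\partial_{q,y}$ (which only touches the factor $1/(yt;q)_\infty$, producing a factor of $t$) gives the same result. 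This is the only nontrivial algebraic step and is where I expect to spend the most care, but it is entirely mechanical.

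Once the $q$-PDE is established, Theorem~\ref{principalliuhthm} with $k=1$ guarantees an expansion
\[
F(x,y)=\sum_{n=0}^\infty c_n\,\Phi_n^{(\alpha)}(x,y|q)
\]
with coefficients $c_n$ independent of $x,y$. To identify the $c_n$, I would set $x=0$: since $\Phi_n^{(\alpha)}(0,y|q)=y^n$, this reduces the expansion to $F(0,y)=\sum_{n\ge 0}c_n y^n$. But $F(0,y)=1/(yt;q)_\infty$, and the classical $q$-exponential identity
\[
\frac{1}{(yt;q)_\infty}=\sum_{n=0}^\infty\frac{(yt)^n}{(q;q)_n}
\]
immediately yields $c_n=t^n/(q;q)_n$, proving \reff{genfun:eqn1}.
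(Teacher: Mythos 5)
Your proposal is correct and follows essentially the same route as the paper: denote the right-hand side by $f(x,y)$, note its analyticity at $(0,0)$, verify the $q$-partial differential equation $\partial_{q,x}\{f\}=\partial_{q,y}\{f(x,y)-\alpha f(qx,y)\}$, invoke Theorem~\ref{principalliuhthm} with $k=1$, and then determine the coefficients by setting $x=0$ and applying the $q$-binomial theorem to $1/(yt;q)_\infty$. Your explicit computation of $\partial_{q,x}\{f\}$ and of $(1-\alpha\eta_x)\{f\}$ is accurate and merely makes explicit the ``direct computation'' the paper leaves to the reader.
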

\begin{proof} Denote the right-hand side of (\ref{genfun:eqn1}) by $f(x, y).$ It is easily seen that $f(x, y)$ is
an analytic function of $x$ and $y$, for $\max\{|xt|, |yt|\}<1.$  Thus, $f(x, y)$ is analytic at $(0, 0)\in \mathbb{C}^2.$
A direct computation shows that
\[
\partial_{q, x}\{f(x, y)\}=\partial_{q, y}\{f(x, y)-\alpha f(qx, y)\}.
\]
 Thus, by Theorem~~\ref{principalliuhthm},  there exists a sequence $\{\lambda_n\}$ independent of $x$ and $y$ such that
 \[
\frac{(\alpha xt; q)_\infty}{(xt, yt; q)_\infty}=\sum_{n=0}^\infty \lambda_n \Phi_n^{(\alpha)}(x, y|q).
 \]
 Putting $x=0$ in the above equation, using $\Phi_n^{(\alpha)}(0, y|q)=y^n$ and $q$-binomial theorem, we find that
 \[
 \sum_{n=0}^\infty \lambda_n y^n=\frac{1}{(yt; q)_\infty}=\sum_{n=0}^\infty \frac{y^nt^n}{(q; q)_n},
 \]
 which implies $\lambda_n=t^n/(q; q)_n.$ We complete the proof of Theorem~\ref{genthma}.
\end{proof}
Al-Salam and Carlitz \cite{SalamCarlitz} (see also \cite {Cao2010, Liu1997}) found the following two bilinear generating functions
by the transformation theory of $q$-series, which is also called the $q$-Mehler formula for  the homogeneous Hahn polynomials.
\begin{thm}\label{genthmb} For $\max\{|xtu|, |xtv|, |ytu|, |ytv| \}<1,$ we have
\begin{align}
&\sum_{n=0}^\infty \Phi^{(\alpha)}_n (x, y|q)\Phi^{(\beta)}_n (u, v|q)\frac{t^n}{(q; q)_n}\label{genfun:eqn2}\\
&=\frac{(\alpha xtv, \beta ytu; q)_\infty}{(xtv, ytv, ytu; q)_\infty}
{_3\phi_2}\({{\alpha, \beta, ytv}\atop{\alpha xtv, \beta ytu}}; q, xtu\).\nonumber
\end{align}
\end{thm}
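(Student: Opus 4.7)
The plan is to apply Theorem~\ref{principalliuhthm} with $k=2$ to the right-hand side of (\ref{genfun:eqn2}). Let $F(x,y,u,v)$ denote this right-hand side, regarding $t,\alpha,\beta$ as fixed parameters. Under the hypothesis $\max\{|xtu|,|xtv|,|ytu|,|ytv|\}<1$, both the infinite-product prefactor and the ${_3\phi_2}$ series converge, and $F$ is analytic in each of the four variables separately on a polydisc neighbourhood of the origin; hence by Hartogs' Theorem~\ref{hartogthm}, $F$ is analytic at $(0,0,0,0)\in\mathbb{C}^4$.

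The core step is to verify that $F$ satisfies the two $q$-partial differential equations
\[
\partial_{q,x}\{F\}=\partial_{q,y}(1-\alpha\eta_x)\{F\},\qquad
\partial_{q,u}\{F\}=\partial_{q,v}(1-\beta\eta_u)\{F\}.
\]
I would first observe that $F$ is invariant under the joint swap $(x,y,\alpha)\leftrightarrow(u,v,\beta)$: the sets $\{xtv,ytu\}$, $\{ytv\}$, and $\{xtu\}$ are each preserved, and so is the unordered pair $\{\alpha xtv,\beta ytu\}$ appearing both in the prefactor and as the lower parameters of the ${_3\phi_2}$. Hence only the first equation needs to be checked, and the second follows by symmetry. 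To verify the first, I would write $F=P\cdot S$ with
\[
P=\frac{(\alpha xtv,\beta ytu;q)_\infty}{(xtv,ytv,ytu;q)_\infty}
\]
and $S$ the ${_3\phi_2}$ sum, apply the $q$-Leibniz rule $\partial_{q,x}\{PS\}=\partial_{q,x}\{P\}\cdot S(qx,\cdot)+P\cdot\partial_{q,x}\{S\}$ (and its $y$-analogue) to each side, cancel the common factor, and reduce the remaining identity to a coefficient-wise statement on the ${_3\phi_2}$ after a single index shift $k\to k+1$ in one sum. This algebraic bookkeeping is the main obstacle, but every manipulation is of the standard $q$-hypergeometric type.

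Once the PDEs are established, Theorem~\ref{principalliuhthm} guarantees an expansion
\[
F(x,y,u,v)=\sum_{m,n=0}^\infty \lambda_{m,n}\,\Phi^{(\alpha)}_m(x,y|q)\,\Phi^{(\beta)}_n(u,v|q)
\]
with $\lambda_{m,n}$ independent of $x,y,u,v$. To pin these down I would set $x=u=0$: using $\Phi^{(\alpha)}_m(0,y|q)=y^m$ and $\Phi^{(\beta)}_n(0,v|q)=v^n$, the prefactor collapses to $1/(ytv;q)_\infty$, while the ${_3\phi_2}$ has argument $xtu=0$ and therefore equals $1$. Thus $F(0,y,0,v)=1/(ytv;q)_\infty$, and the $q$-binomial theorem gives
\[
F(0,y,0,v)=\sum_{n=0}^\infty \frac{(yv)^n t^n}{(q;q)_n}.
\]
Comparing with $\sum_{m,n}\lambda_{m,n}y^mv^n$ forces $\lambda_{m,n}=0$ for $m\ne n$ and $\lambda_{n,n}=t^n/(q;q)_n$, which exactly reproduces the left-hand side of (\ref{genfun:eqn2}) and completes the proof.
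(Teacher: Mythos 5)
Your argument is correct, but it takes a different route from the paper. The paper treats the right-hand side as a function of the two variables $(x,y)$ only, verifies the single equation $\partial_{q,x}\{f\}=\partial_{q,y}\{f-\alpha f(qx,\cdot)\}$, applies Theorem~\ref{principalliuhthm} with $k=1$, and then identifies the coefficients $\lambda_n$ by setting $x=0$ and recognizing $(\beta ytu;q)_\infty/(ytu,ytv;q)_\infty$ via the already-proved generating function of Theorem~\ref{genthma}, which immediately gives $\lambda_n=\Phi_n^{(\beta)}(u,v|q)t^n/(q;q)_n$. You instead invoke the $k=2$ case in all four variables $(x,y,u,v)$, use the swap symmetry $(x,y,\alpha)\leftrightarrow(u,v,\beta)$ (which you verify correctly) to reduce the two required $q$-PDEs to one, and then pin down the double-indexed coefficients by setting $x=u=0$, where $F(0,y,0,v)=1/(ytv;q)_\infty$ depends only on the product $yv$, forcing $\lambda_{m,n}=0$ off the diagonal and $\lambda_{n,n}=t^n/(q;q)_n$ by the $q$-binomial theorem. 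What your route buys is independence from Theorem~\ref{genthma}: the coefficient identification is self-contained, and the diagonal structure of the bilinear sum emerges automatically from the fact that the specialized function depends on $yv$ alone. What the paper's route buys is economy: only two-variable analyticity is needed, no symmetry observation is required, and the coefficients drop out in one line from a result already established by the same method two paragraphs earlier. Note that the computational core is identical in both proofs, namely verifying that the ${}_3\phi_2$ expression satisfies $\partial_{q,x}\{F\}=\partial_{q,y}(1-\alpha\eta_x)\{F\}$; you describe this only as a $q$-Leibniz computation with an index shift, while the paper at least records the common value of the two sides, so to make your write-up airtight you should carry out and display that verification (it is routine but it is the one step on which the whole argument rests).
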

Now we will use Theorem~~\ref{principalliuhthm} to give a new  proof of this formula.
\begin{proof}  If we use $f(x, y)$ to denote the right-hand side of (\ref{genfun:eqn2}), then it is easy to show that
$f(x, y)$ is a two-variable analytic function of $x$ and $y$ for  $\max\{|xtu|, |xtv|, |ytu|, |ytv| \}<1.$
By a direct computation,  we find that
\begin{align*}
&\partial_{q, x}\{f(x, y)\}=\partial_{q, y}\{f(x, y)-\alpha f(qx, y)\}\\
&=\frac{t(\alpha xtv, \beta ytu; q)_\infty}{(xtv, ytv, ytu; q)_\infty}
\sum_{n=0}^\infty \frac{(\alpha; q)_{n+1} (\beta, ytv; q)_n (xtu)^n}{(q; q)_n (\alpha xtv, \beta ytu; q)_{n+1}}
\(u+(v-\beta u-ytuv) q^n\).
\end{align*}
Thus, by Theorem~\ref{principalliuhthm},  there exists a sequence $\{\lambda_n\}$ independent of $x$ and $y$ such
that
\begin{equation}
f(x, y)=\sum_{n=0}^\infty \lambda_n \Phi_n^{(\alpha)} (x, y|q).
\label{genfun:eqn3}
\end{equation}
Setting $x=0$ in the above equation and using $\Phi_n^{(\alpha)}(0, y|q)=y^n$, we immediately find that
\[
\sum_{n=0}^\infty \lambda_n y^n=\frac{(\beta ytu; q)_\infty}{(ytu, ytv; q)_\infty}.
\]
Using Theorem~\ref{genthma}, we find that the right-hand side of the above equation equals
\[
\sum_{n=0}^\infty \Phi_n^{(\beta)}(u, v|q)\frac{(ty)^n}{(q; q)_n}.
\]
It follows that $\lambda_n=\Phi^{(\beta)}(u, v|q)t^n/(q; q)_n.$
Substituting this into (\ref{genfun:eqn3}), we complete the proof of Theorem~\ref{genthmb}.
\end{proof}
%%%%%%%%%%%%%%%%%%%%%%%%%%%%%%%%%%%%%%%%%%%%%%%%%%%%%%%%%%%%%%%%%%%%%%%%%%%%%%%%%%%%%%%%%%%%%%%%%
\section{some $q$-integral formulas}
%%%%%%%%%%%%%%%%%%%%%%%%%%%%%%%%%%%%%%%%%%%%%%%%%%%%%%%%%%%%%%%%%%%%%%%%%%%%%%%%%%%%%%%%%%%%%%%%%
 The Jackson $q$-integral of the function $f(x)$ from $a$ to $b$ is defined as
\[
\int_{a}^b f(x) d_q x=(1-q)\sum_{n=0}^\infty [b f(bq^n)-af(aq^n)]q^n.
\]
The Andrews-Askey integral formula \cite[Theorem~1]{Andrews+Askey} can be stated in the following proposition.
\begin{prop}\label{andaskpp} If there are no zero factors in the denominator of the integral, then,
we have
\begin{equation}
\int_{u}^v \frac{(qx/u, qx/v; q)_\infty}{(bx, cx; q)_\infty}d_q x
=\frac{(1-q)v(q, u/v, qv/u, bcuv; q)_\infty}{(bu, bv, cu, cv; q)_\infty}.
\label{raaeqn1}
\end{equation}
\end{prop}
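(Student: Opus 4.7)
The plan is to prove the integral identity by unpacking the Jackson $q$-integral and applying standard basic hypergeometric series machinery, following the strategy of Andrews and Askey in their original proof.

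First, I would split $\int_u^v = \int_0^v - \int_0^u$ and apply the Jackson definition $\int_0^a f\,d_q x = a(1-q)\sum_{n\ge 0} f(aq^n)q^n$ to each piece. Applied to the integrand, the identity $(\beta q^{n+1};q)_\infty = (\beta;q)_\infty/(\beta;q)_n$ reduces every infinite Pochhammer evaluated at $x = vq^n$ or $x = uq^n$ to a ratio of a finite Pochhammer into a global infinite product. After gathering prefactors, the $v$-piece takes the form
\[
v(1-q)\frac{(q,\,qv/u;q)_\infty}{(bv,\,cv;q)_\infty}\,{_2\phi_1}\({{bv,\,cv}\atop{qv/u}};q,q\),
\]
and the $u$-piece is the analogous expression with $u$ and $v$ swapped.

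Next, I would apply a Heine transformation to each ${_2\phi_1}$ to bring them into a common shape, so that after recombination the resulting series is summable by the $q$-Gauss formula ${_2\phi_1}\({{a,b}\atop{c}};q,c/(ab)\)=(c/a,c/b;q)_\infty/(c,c/(ab);q)_\infty$. A careful simplification of the surviving infinite products, using Jacobi's triple product to rewrite factors such as $(u/v;q)_\infty(qv/u;q)_\infty$, then yields the claimed right-hand side.

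A cleaner alternative, which I expect to be less error-prone, is a functional-equation argument. Writing $I(b,c)$ for the left-hand side and $R(b,c)$ for the right-hand side, the partial-fraction identity $x/(bx,cx;q)_\infty = (b-c)^{-1}\bigl[1/(bx,qcx;q)_\infty - 1/(qbx,cx;q)_\infty\bigr]$ together with $1/(qbx;q)_\infty = (1-bx)/(bx;q)_\infty$ shows that both $I$ and $R$ satisfy the same contiguous relation
\[
F(qb,c) = \frac{c-b}{c}F(b,c) + \frac{b}{c}F(b,qc),
\]
and its $b \leftrightarrow c$ symmetric counterpart. Iterating in both variables reduces the identity to the base case $b = c = 0$, namely $\int_u^v (qx/u,qx/v;q)_\infty d_q x = (1-q)v(q,u/v,qv/u;q)_\infty$, which follows from the Jackson expansion combined with Jacobi's triple product. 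The main obstacle in either approach is the final algebraic bookkeeping of the infinite products, where the theta-function identities must be matched precisely to collapse to the compact right-hand side.
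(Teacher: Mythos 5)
First, note that the paper does not prove this proposition at all: it is quoted as the Andrews--Askey integral with a citation, so there is no internal proof to compare with; your proposal has to stand on its own. Your first route starts correctly --- unpacking the Jackson integral does give exactly the two pieces $v(1-q)\frac{(q,qv/u;q)_\infty}{(bv,cv;q)_\infty}\,{}_2\phi_1(bv,cv;qv/u;q,q)$ and its $u\leftrightarrow v$ partner --- but the finishing step is misidentified. A single ${}_2\phi_1$ with argument $q$ has no $q$-Gauss evaluation (Heine's transform of ${}_2\phi_1(a,b;c;q,q)$ produces argument $abq/c$, not $ab/c$), and no Heine transformation makes the two pieces literally merge into one $q$-Gauss-summable unilateral series. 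What the two-term combination actually is, after the right pair of Heine transforms, is the splitting of a bilateral series: the $v$-piece and $u$-piece become the $n\ge 0$ and $n<0$ halves of ${}_1\psi_1\bigl(q/(cu);qbv;q,cv\bigr)$, and it is Ramanujan's ${}_1\psi_1$ summation (equivalently the nonterminating $q$-Chu--Vandermonde sum, Gasper--Rahman (II.24)) that produces the product side. So the strategy is viable, but the one step that carries all the weight is asserted with the wrong summation theorem and no verification.

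Your second route contains a concrete logical gap. The partial-fraction identity and the contiguous relation $cF(qb,c)=(c-b)F(b,c)+bF(b,qc)$ are fine, and both $I$ and $R$ do satisfy it; but this relation does not reduce the identity to the single point $b=c=0$. In power-series form it says only $(1-q^m)a_{m,n-1}=(1-q^n)a_{m-1,n}$, i.e.\ it propagates Taylor coefficients along anti-diagonals, so it determines $F$ from its restriction to a whole boundary line ($b=0$ for all small $c$, or $c=0$ for all small $b$), not from $F(0,0)$. Indeed $F(b,c)=b+c$ satisfies the relation and vanishes at the origin, so $I-R$ vanishing at $(0,0)$ forces nothing. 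To close the argument you would need the $c=0$ case,
\begin{equation*}
\int_u^v \frac{(qx/u,qx/v;q)_\infty}{(bx;q)_\infty}\,d_qx=\frac{(1-q)v(q,u/v,qv/u;q)_\infty}{(bu,bv;q)_\infty},
\end{equation*}
valid for all small $b$, and this one-parameter identity is itself a nontrivial summation of the same ${}_1\psi_1$/nonterminating-Vandermonde type; likewise your stated $b=c=0$ base case is true but does not follow from ``Jackson expansion plus Jacobi's triple product'' alone --- it already requires a genuine two-term series identity. So as written neither route is complete: the first needs the ${}_1\psi_1$ (or II.24) made explicit, and the second needs both the correct boundary reduction and an independent proof of that boundary evaluation.
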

\begin{defn}\label{carlitzdefn} For any nonnegative integer $n,$ we define $W_n(a, b, u, v|q)$ as
\[
W_n(a, b, u, v|q)=\sum_{j=0}^n {n\brack j}_q \frac{(av, bv; q)_j}{(abuv; q)_j}u^jv^{n-j}.
\]
\end{defn}
Applying $\mathcal{\partial}_{q, b}^n$ to act both sides of (\ref{raaeqn1}) and then using the $q$-Leibniz rule, one
can easily find the following proposition \cite{Wang2009}.
\begin{prop}\label{wangpp}
 If there are no zero factors in the denominator of the integral, then,
we have
\begin{equation*}
\int_{u}^v \frac{x^n(qx/u, qx/v; q)_\infty}{(bx, cx; q)_\infty}d_q x
=\frac{(1-q)v(q, u/v, qv/u, bcuv; q)_\infty}{(bu, bv, cu, cv; q)_\infty}
W_n(b, c, u, v|q).
\end{equation*}
\end{prop}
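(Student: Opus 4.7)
The plan is to apply the operator $\partial_{q,b}^n$ to both sides of the Andrews--Askey formula (\ref{raaeqn1}) and read off the claim. First, on the left-hand side, Definition~\ref{qddfn} gives $\partial_{q,b}\{1/(bx;q)_\infty\}=x/(bx;q)_\infty$, and iteration yields $\partial_{q,b}^n\{1/(bx;q)_\infty\}=x^n/(bx;q)_\infty$. Since the Jackson $q$-integral in $x$ is a convergent series and the operator $\partial_{q,b}^n$ acts only on the parameter $b$, it commutes with the integration, producing exactly the left-hand side of the stated identity.

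On the right-hand side the factor $(1-q)v(q,u/v,qv/u;q)_\infty/[(cu,cv;q)_\infty]$ is independent of $b$ and passes through untouched, reducing the task to proving
\[
\partial_{q,b}^n\{F(b)\}=F(b)\,W_n(b,c,u,v|q),\qquad F(b):=\frac{(bcuv;q)_\infty}{(bu,bv;q)_\infty}.
\]
I would factor $F=fg$ with $f(b)=(bcuv;q)_\infty/(bu;q)_\infty$ and $g(b)=1/(bv;q)_\infty$. Iteration shows $\partial_{q,b}^{n-j}\{g(b)\}=v^{n-j}/(bv;q)_\infty$. For $f$, the $q$-binomial theorem gives the expansion $f(b)=\sum_{k\ge 0}(cv;q)_k(bu)^k/(q;q)_k$; term-by-term $q$-differentiation, combined with $(cv;q)_{k+j}=(cv;q)_j(cvq^j;q)_k$ and a second application of the $q$-binomial theorem, yields
\[
\partial_{q,b}^j\{f(b)\}=\frac{(cv;q)_j\,u^j\,(bcuv;q)_\infty}{(bcuv;q)_j(bu;q)_\infty}.
\]

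Next I would invoke the $q$-Leibniz rule in the form
\[
\partial_{q,b}^n\{f(b)g(b)\}=\sum_{j=0}^{n}{n\brack j}_q\,\partial_{q,b}^j\{f(b)\}\cdot\bigl(\partial_{q,b}^{n-j}g\bigr)(q^j b),
\]
whose argument shift $b\mapsto q^j b$ converts $1/(bv;q)_\infty$ into $1/(q^j bv;q)_\infty=(bv;q)_j/(bv;q)_\infty$. Assembling the pieces, the $j$-th term becomes ${n\brack j}_q (bv,cv;q)_j u^j v^{n-j}/(bcuv;q)_j$ times $F(b)$, and summing over $j$ recovers $F(b)\,W_n(b,c,u,v|q)$, completing the proof.

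The hardest part will be fixing the correct variant of the $q$-Leibniz rule: several forms circulate with different placements of the $q^j$-shift, and only the one written above causes the factor $(bv;q)_j$ to appear in the numerator of $W_n(b,c,u,v|q)$. Once that convention is settled, the two auxiliary $q$-derivative identities and the final matching with the definition of $W_n$ amount to routine bookkeeping.
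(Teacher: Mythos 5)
Your proposal is correct and follows exactly the route the paper indicates for Proposition~\ref{wangpp}: apply $\partial_{q,b}^n$ to both sides of the Andrews--Askey formula \reff{raaeqn1} and invoke the $q$-Leibniz rule, which the paper states in one line with a citation to Wang. Your version of the Leibniz rule $\partial_{q,b}^n\{fg\}(b)=\sum_{j}{n\brack j}_q\partial_{q,b}^j\{f\}(b)\,(\partial_{q,b}^{n-j}g)(q^jb)$ is the correct one (it checks by induction via ${n+1\brack j}_q={n\brack j-1}_q+q^j{n\brack j}_q$), and the auxiliary identities for $\partial_{q,b}^j$ of $(bcuv;q)_\infty/(bu;q)_\infty$ and $1/(bv;q)_\infty$ are verified, so your write-up simply supplies the details the paper delegates to the reference.
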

The main result of this section is the following $q$-integral formula.
\begin{thm}\label{liuqint}If there are no zero factors in the denominator of the integral, then,
we have
\end{thm}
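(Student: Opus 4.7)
The plan is to apply the $k=1$ case of Theorem~\ref{principalliuhthm} with the $q$-integral itself playing the role of the two-variable analytic function. Concretely, I would single out two of the parameters in the integrand (most naturally the pair whose role mirrors $(x,y)$ in the factor $(\alpha x t; q)_\infty/(xt, yt; q)_\infty$ appearing in Theorem~\ref{genthma}), freeze the others, and write
\[
F(x,y) := \int_u^v (\text{integrand depending on } x,y) \, d_q t.
\]
First I would check that $F(x,y)$ is jointly analytic in a neighbourhood of $(0,0)$. This follows because, inside the natural polydisk, the Jackson sum converges absolutely and uniformly on compact subsets, each summand is analytic in $(x,y)$, and Hartogs' theorem (Theorem~\ref{hartogthm}) upgrades the separate analyticity to joint analyticity.

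Next I would verify that $F$ satisfies the $q$-partial differential equation
\[
\partial_{q,x}\{F\}=\partial_{q,y}(1-\alpha\,\eta_{x})\{F\}.
\]
Differentiation commutes with the Jackson sum termwise (uniform convergence), so the verification reduces to the identity
\[
\partial_{q,x}\!\left\{\frac{(\alpha xt;q)_\infty}{(xt,yt;q)_\infty}\right\}
=\partial_{q,y}\!\left\{\frac{(\alpha xt;q)_\infty}{(xt,yt;q)_\infty}-\alpha\,\frac{(\alpha qxt;q)_\infty}{(qxt,yt;q)_\infty}\right\},
\]
which in turn is just the generating-function identity (\ref{genfun:eqn1}) paired with Proposition~\ref{liupdepp}. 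Once this is in hand, Theorem~\ref{principalliuhthm} yields a unique expansion
\[
F(x,y)=\sum_{n=0}^{\infty}\lambda_n\,\Phi^{(\alpha)}_n(x,y|q).
\]

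To determine the coefficients $\lambda_n$, I would set $x=0$, using $\Phi^{(\alpha)}_n(0,y|q)=y^n$, so that the identity becomes
\[
F(0,y)=\sum_{n=0}^{\infty}\lambda_n\, y^n .
\]
On the integral side, $x=0$ strips the $(\alpha xt;q)_\infty/(xt;q)_\infty$ factor down to $1$, leaving an integral of the Andrews--Askey type decorated by a power of $t$. Proposition~\ref{wangpp} then evaluates the coefficients of $y^n$ explicitly in terms of $W_n(b,c,u,v|q)$, thereby pinning down each $\lambda_n$. Reassembling the series against the homogeneous Hahn polynomials produces the claimed closed form.

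The main obstacle is the verification of the $q$-partial differential equation. Because $F$ is defined as a Jackson sum of an infinite $q$-product, one has to track several simultaneous $q$-shifts carefully, and the cancellation that produces the right-hand side only becomes visible after grouping numerator and denominator factors in matched pairs as in the proof of Proposition~\ref{liupdepp}. Everything after that step is routine: the coefficient extraction via $x=0$, the identification with $W_n$, and the final repackaging into Hahn polynomials are each one-line consequences of results already established in Sections~1--4.
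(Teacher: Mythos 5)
Your proposal is correct and takes essentially the same route as the paper: treat the $q$-integral as an analytic function of the parameter pair $(a,b)$, verify the $q$-partial differential equation (the paper does this by a direct evaluation of the integrand rather than by appealing to Theorem~\ref{genthma}), invoke the $k=1$ case of Theorem~\ref{principalliuhthm}, and then pin down $\lambda_n$ by setting $a=0$ and extracting the coefficient of $b^n$ (the paper applies $\partial_{q,b}^n$ and sets $b=0$) via Proposition~\ref{wangpp}. The differences are only in how the routine steps (joint analyticity, the integrand-level $q$-PDE) are justified.
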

\begin{align}
&\int_{u}^v \frac{(qx/u, qx/v, \alpha ax; q)_\infty}{(ax, bx, cx, dx; q)_\infty} d_q x
\label{raaeqn2}\\
&=\frac{(1-q)v(q, u/v, qv/u, cduv; q)_\infty }{(cu, cv, du, dv; q)_\infty }\sum_{n=0}^\infty \frac{W_n(c, d, u, v|q) \Phi_n^{(\alpha)}(a, b|q)}{(q; q)_n}.\nonumber
\end{align}
\begin{proof}
If we denote the $q$-integral in the above theorem by $I(a, b)$, then it is
easy to show that $I(a, b)$ is analytic near $(0, 0)\in \mathbb {C}^2.$ A straightforward
evaluation shows that
\[
\partial_{q, a} \{I(a, b)\}=\partial_{q, b}\{I(a, b)-\alpha I(aq, b)\}
=(1-\alpha)\int_{u}^v \frac{x(qx/u, qx/v, \alpha qax; q)_\infty}{(ax, bx, cx, dx; q)_\infty} d_q x.
\]
Thus, by Theorem~\ref{principalliuhthm}, there exists a sequence $\{\lambda_n\}$ independent of $a$ and $b$ such that
\begin{equation}
I(a, b)=\int_{u}^v \frac{(qx/u, qx/v, \alpha ax; q)_\infty}{(ax, bx, cx, dx; q)_\infty} d_q x=
\sum_{n=0}^\infty \lambda_n \Phi_n^{(\alpha)} (a, b|q).
\label{raaeqn3}
\end{equation}
Setting $a=0$ in the above equation and using $\Phi_n^{(\alpha)} (0, b|q)=b^n$, we immediately deduce that
\[
\int_{u}^v \frac{(qx/u, qx/v; q)_\infty}{( bx, cx, dx; q)_\infty} d_q x=
\sum_{n=0}^\infty \lambda_n b^n.
\]
Applying $\mathcal{\partial}_{q, b}^n$ to act both sides of the above equation, setting $b=0$,
and using Proposition~\ref{wangpp}, we obtain
\[
\lambda_n=\frac{(1-q)v(q, u/v, qv/u, cduv; q)_\infty W_n(c, d, u, v|q)}{(cu, cv, du, dv; q)_\infty (q; q)_n}.
\]
Substituting the above equation into (\ref{raaeqn3}), we complete the proof Theorem~\ref{liuqint}.
\end{proof}
Theorem~\ref{liuqint} implies some interesting special cases. For example, using this theorem, we can recover
the following $q$-integral formula \cite[Theorem~9]{Liu2010}.

\begin{thm}\label{liuint}If there are no zero factors in the denominator of the integral, then,
we have
\end{thm}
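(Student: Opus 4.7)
The sentence preceding the theorem makes clear that the formula to be established is a specialization of the master identity (\ref{raaeqn2}). My plan is therefore to start from Theorem~\ref{liuqint} and choose the parameters so that the series
\[
\sum_{n\geq0} \frac{W_n(c,d,u,v|q)\,\Phi_n^{(\alpha)}(a,b|q)}{(q;q)_n}
\]
on the right-hand side collapses. The most natural specializations to try first are $a=0$ (which reduces $\Phi_n^{(\alpha)}(0,b|q)$ to $b^n$ and simultaneously removes $(ax;q)_\infty$ from the integrand), $b=0$ (which gives $\Phi_n^{(\alpha)}(a,0|q)=(\alpha;q)_n a^n$), or $\alpha=0$ (which turns the Hahn polynomials into the homogeneous Rogers--Szeg\H{o} polynomials $h_n(a,b|q)$). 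Any one of these reductions leaves a one-parameter series on the right that can be expected to have a product form.

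Once the specialization is fixed, the next step is to substitute the defining formula of $W_n(c,d,u,v|q)$ from Definition~\ref{carlitzdefn} into the sum and interchange the order of summation. This rewrites the right-hand side as an outer $j$-sum whose inner sum over $n$ is a generating series for the specialized $\Phi_n^{(\alpha)}$'s. Theorem~\ref{genthma} then evaluates the inner sum in the product form $(\alpha X T;q)_\infty/(XT,YT;q)_\infty$ for appropriate $X,Y,T$ built from $c,d,u,v$ and the remaining parameters. The resulting outer $j$-sum will then be a ${_2\phi_1}$ or ${_3\phi_2}$ that should be summable in closed form, most likely via the $q$-Gauss sum or a Heine-type transformation, producing the claimed product of infinite $q$-products.

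The main obstacle, as often in this kind of argument, lies not in any conceptual difficulty but in the bookkeeping of $q$-shifted factorials: one must track exactly which factors cancel when $W_n$ is expanded and the summations are swapped, so that the inner series lands precisely in the form required by Theorem~\ref{genthma}. Absolute convergence, and hence legitimacy of the interchange, follows automatically from the hypothesis that the denominator of the integral has no zero factors together with the conditions already needed for (\ref{raaeqn2}) to hold. A useful consistency check along the way is to verify that setting $\alpha=0$ collapses the formula to the integral of Proposition~\ref{wangpp} after summing the Rogers--Szeg\H{o} generating function, which provides a sanity bridge between Theorem~\ref{liuqint} and the specialization at hand.
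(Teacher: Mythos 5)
Your plan correctly senses that Theorem~\ref{liuint} should be a specialization of Theorem~\ref{liuqint}, but the specialization you single out is the wrong one, and this is not a cosmetic slip. The target formula keeps $a$, $b$, $c$ and $\alpha$ completely general; the only parameter of (\ref{raaeqn2}) that disappears is $d$. So the correct move is to set $d=0$ (not $a=0$, $b=0$ or $\alpha=0$, each of which would kill a parameter that must survive), and the key observation is that $W_n(c,0,u,v|q)=\Phi_n^{(cv)}(u,v|q)$, so that the right-hand side of (\ref{raaeqn2}) becomes the prefactor $\frac{(1-q)v(q,u/v,qv/u;q)_\infty}{(cu,cv;q)_\infty}$ times the bilinear series $\sum_{n\ge 0}\Phi_n^{(\alpha)}(a,b|q)\,\Phi_n^{(cv)}(u,v|q)/(q;q)_n$. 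Note also that the answer you are aiming for is not a pure product but a ${_3\phi_2}$, so expecting the reduced series to ``have a product form'' and to be closed by the $q$-Gauss sum is already off target: $q$-Gauss enters only in the further specialization $\alpha=bcuv$, which is Corollary~\ref{liuinta}, not Theorem~\ref{liuint}.

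The summation mechanism you propose also does not go through as stated. If you insert the definition of $W_n$ and interchange the two sums, the inner sum is $\sum_{m\ge 0}\Phi_{m+j}^{(\alpha)}(a,b|q)\,v^{m}/(q;q)_m$, a \emph{shifted} generating function which Theorem~\ref{genthma} does not evaluate, since $\Phi_n^{(\alpha)}$ obeys no simple index-shift law. What sums the bilinear series in one stroke is the $q$-Mehler formula, Theorem~\ref{genthmb}, applied with $t=1$, $\beta=cv$ and $(x,y)$ replaced by $(a,b)$: it gives $\frac{(\alpha av,\,bcuv;q)_\infty}{(av,\,bu,\,bv;q)_\infty}\,{_3\phi_2}\left({{\alpha,\,bv,\,cv}\atop{\alpha av,\,bcuv}};q,\,au\right)$, which combined with the prefactor above is exactly Theorem~\ref{liuint}; this is precisely the paper's argument. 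If you insist on avoiding Theorem~\ref{genthmb}, your double-sum manipulation essentially forces you to re-derive that bilinear formula by hand, which is far more bookkeeping than the two-step reduction ($d=0$, then $q$-Mehler) and is not sketched accurately in your proposal.
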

\begin{align*}
\int_{u}^v \frac{(qx/u, qx/v, \alpha ax; q)_\infty}{(ax, bx, cx; q)_\infty} d_q x&=
\frac{(1-q)v (q, u/v, qv/u, \alpha a v, bcuv; q)_\infty}{(av, bu, bv, cu, cv; q)_\infty}\\
&\qquad\times {_3\phi_2}\({{\alpha, bv, cv}\atop{\alpha av, bcuv}}; q, au\).
\end{align*}
\begin{proof}
Setting $d=0$ in (\ref{raaeqn2}) and noticing that $W_n(c, 0, u, v|q)=\Phi^{(cv)}(u, v|q),$ we
find that the $q$-integral in Theorem~\ref{liuint} equals
\[
\frac{(1-q)v(q, u/v, qv/u; q)_\infty}{( cu, cv; q)_\infty}
\sum_{n=0}^\infty \frac{\Phi_n^{(\alpha)} (a, b|q)\Phi_n^{(cv)} (u, v|q)}{(q; q)_n}.
\]
Using Theorem~\ref{genthmb}, we find that the series on the right-hand side of the above equation equals
\[
\frac{(\alpha av, bcuv; q)_\infty}{(av, bu, bv; q)_\infty}
{_3\phi_2}\({{\alpha, bv, cv}\atop{\alpha av, bcuv}}; q, au\).
\]
Combining the above two equations, we complete the proof of the theorem.
\end{proof}
Theorem~\ref{liuint} includes the following $q$-integral formula due to Al-Salam and Verma \cite[Eq. (1.3)]{SalamVerma} as a special case.
\begin{cor}\label{liuinta}  If there are no zero factors in the denominator of the integral, then,
we have
\[
\int_{u}^v \frac{(qx/u, qx/v, abcuvx; q)_\infty}{(ax, bx, cx; q)_\infty} d_q x=
\frac{(1-q)v (q, u/v, qv/u, abuv, acuv,  bcuv; q)_\infty}{(au, av, bu, bv, cu, cv; q)_\infty}.
\]
\end{cor}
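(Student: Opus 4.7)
The plan is to obtain Corollary~\ref{liuinta} as a direct specialization of Theorem~\ref{liuint}. The natural choice is to set $\alpha=bcuv$, because this makes the factor $(\alpha ax;q)_\infty$ in the numerator of the integrand equal to $(abcuvx;q)_\infty$, so the integral on the left-hand side of Theorem~\ref{liuint} coincides with the integral in Corollary~\ref{liuinta}. The work then reduces to simplifying the right-hand side of Theorem~\ref{liuint} under this specialization.

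After the substitution $\alpha=bcuv$, the first upper parameter and the second lower parameter of the $_3\phi_2$ on the right-hand side of Theorem~\ref{liuint} both equal $bcuv$, so the corresponding $q$-shifted factorials cancel termwise and the series collapses to a $_2\phi_1$:
\[
{_2\phi_1}\(\,{{bv,\; cv}\atop{abcuv^2}};\, q,\, au\,\).
\]
The crux of the argument is the observation that this $_2\phi_1$ is in $q$-Gauss form: with $A=bv$, $B=cv$ and $C=abcuv^2$, one has $C/(AB)=au$, so the argument matches exactly the summable case of the $q$-Gauss identity
\[
{_2\phi_1}\(\,{{A,\; B}\atop{C}};\, q,\, \frac{C}{AB}\,\)
=\frac{(C/A,\, C/B;\, q)_\infty}{(C,\, C/(AB);\, q)_\infty}.
\]
Evaluating yields $(abuv,\, acuv;\, q)_\infty/(abcuv^2,\, au;\, q)_\infty$.

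The remaining step is bookkeeping: multiply this evaluation by the prefactor of the right-hand side of Theorem~\ref{liuint} specialised at $\alpha=bcuv$, observe that $(abcuv^2;q)_\infty$ cancels against the $(\alpha av;q)_\infty=(abcuv^2;q)_\infty$ appearing in that prefactor, and absorb the extra $(au;q)_\infty$ into the denominator product. What remains is precisely the right-hand side of Corollary~\ref{liuinta}. The only conceptually nontrivial point, and the one that has to be identified first, is that the specialization $\alpha=bcuv$ is exactly what simultaneously forces the $_3\phi_2$ to collapse to a $_2\phi_1$ and forces the resulting $_2\phi_1$ to satisfy the $q$-Gauss balance condition $C/(AB)=au$; every subsequent manipulation is a routine simplification of infinite $q$-shifted factorials.
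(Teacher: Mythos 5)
Your proposal is correct and follows essentially the same route as the paper: specialize $\alpha=bcuv$ in Theorem~\ref{liuint}, let the repeated parameter $bcuv$ collapse the ${_3\phi_2}$ to ${_2\phi_1}\({{bv, cv}\atop{abcuv^2}}; q, au\)$, and evaluate it by the $q$-Gauss summation before simplifying the infinite products. Your added observation that $\alpha=bcuv$ simultaneously produces the collapse and the $q$-Gauss balance $C/(AB)=au$ is a nice explanatory remark, but the argument itself is the paper's.
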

\begin{proof} Taking $\alpha=bcuv$ in Theorem~\ref{liuint}, we immediately find that
\begin{align*}
\int_{u}^v \frac{(qx/u, qx/v, abcuvx; q)_\infty}{(ax, bx, cx; q)_\infty} d_q x&=
\frac{(1-q)v (q, u/v, qv/u, abcuv^2, bcuv; q)}{(av, bu, bv, cu, cv; q)_\infty}\\
&\qquad\times {_2\phi_1}\({{ bv, cv}\atop{abcuv^2}}; q, au\).
\end{align*}
Using the $q$-Gauss summation, we find that
\[
{_2\phi_1}\({{ bv, cv}\atop{abcuv^2}}; q, au\)=\frac{(abuv, acuv; q)_\infty}{(abcuv^2, au; q)_\infty}.
\]
Combining the above two equations, we complete the proof of Corollary~\ref{liuinta}.
\end{proof}
Using the same argument that we used to prove Theorem~\ref{liuqint}, we can derive the
following $q$-integral formula.
\begin{thm}\label{liuqqthm}
 If there are no zero factors in the denominator of the integral, then,
we have
\begin{align*}
&\int_{u}^v \frac{(qx/u, qx/v, \alpha ax, \beta cx; q)_\infty}{(ax, bx, cx, dx; q)_\infty} d_q x\\
&=(1-q)v (q, u/v, qv/u; q)_\infty \sum_{m, n=0}^\infty \Phi^{(\alpha)}_m(a, b|q)\Phi^{(\beta)}_n(c, d|q)
\frac{h_{m+n}(u, v|q)}{(q; q)_m (q; q)_n}.
\end{align*}
\end{thm}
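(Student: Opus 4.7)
The plan is to emulate the proof of Theorem~\ref{liuqint}, but apply Theorem~\ref{principalliuhthm} in the case $k=2$, with the two Hahn families being $\Phi_m^{(\alpha)}(a,b|q)$ and $\Phi_n^{(\beta)}(c,d|q)$. Denote the left-hand side by $I(a,b,c,d)$. The integrand depends on $(a,b)$ and on $(c,d)$ through two structurally identical blocks, so the $q$-derivative calculation already carried out in the proof of Theorem~\ref{liuqint} applies to each pair of variables separately.

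Concretely, $I$ is analytic near the origin in $\mathbb{C}^4$ by Hartogs' theorem (Theorem~\ref{hartogthm}), and a direct computation identical to the one used for Theorem~\ref{liuqint} gives
\[
\partial_{q,a}\{I\}=\partial_{q,b}(1-\alpha\eta_a)\{I\} = (1-\alpha)\int_u^v \frac{x(qx/u, qx/v, \alpha qax, \beta cx;q)_\infty}{(ax, bx, cx, dx;q)_\infty}\,d_q x,
\]
and symmetrically $\partial_{q,c}\{I\} = \partial_{q,d}(1-\beta\eta_c)\{I\}$. By Theorem~\ref{principalliuhthm} with $k=2$, there exists a double sequence $\{\lambda_{m,n}\}$ independent of $a,b,c,d$ such that
\[
I(a,b,c,d) = \sum_{m,n \ge 0} \lambda_{m,n}\,\Phi_m^{(\alpha)}(a,b|q)\,\Phi_n^{(\beta)}(c,d|q).
\]

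To identify $\lambda_{m,n}$, I would set $a=c=0$. Using $\Phi_m^{(\alpha)}(0,b|q)=b^m$ and $\Phi_n^{(\beta)}(0,d|q)=d^n$, the expansion side becomes $\sum_{m,n}\lambda_{m,n}b^m d^n$, while the left-hand side reduces to an Andrews--Askey integral (Proposition~\ref{andaskpp}):
\[
I(0,b,0,d) = \int_u^v \frac{(qx/u, qx/v;q)_\infty}{(bx, dx;q)_\infty}\,d_q x = \frac{(1-q)v(q, u/v, qv/u, bduv;q)_\infty}{(bu, bv, du, dv;q)_\infty}.
\]
The key algebraic step is then to recognise the right-hand side as the $\alpha=\beta=0$, $t=1$ specialisation of the $q$-Mehler formula (Theorem~\ref{genthmb}), namely the Rogers-type identity
\[
\frac{(bduv;q)_\infty}{(bu, bv, du, dv;q)_\infty} = \sum_{n=0}^\infty \frac{h_n(b,d|q)\,h_n(u,v|q)}{(q;q)_n}.
\]
Expanding $h_n(b,d|q) = \sum_{k=0}^n {n\brack k}_q b^k d^{n-k}$ and re-indexing $(k,n-k)\to (m,n)$ via ${m+n\brack m}_q/(q;q)_{m+n} = 1/[(q;q)_m(q;q)_n]$ yields
\[
I(0,b,0,d) = (1-q)v(q, u/v, qv/u;q)_\infty \sum_{m,n\ge 0} \frac{h_{m+n}(u,v|q)}{(q;q)_m(q;q)_n}\, b^m d^n,
\]
so matching coefficients of $b^m d^n$ gives $\lambda_{m,n} = (1-q)v(q, u/v, qv/u;q)_\infty\, h_{m+n}(u,v|q)/[(q;q)_m(q;q)_n]$, which substituted back produces the asserted formula.

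The main obstacle is not the $q$-partial differential equation check --- that is a carbon copy of the computation done for Theorem~\ref{liuqint}, performed once in each pair of variables, because the integrand factors cleanly into $(a,b)$- and $(c,d)$-blocks of the same shape. Rather, the delicate point is the final algebraic manipulation: one must recognise the Andrews--Askey evaluation of $I(0,b,0,d)$ as the Rogers bilinear sum in $h_n(b,d|q)\,h_n(u,v|q)$ and then re-index the resulting double sum so that the coefficient of $b^m d^n$ assumes the diagonal form $h_{m+n}(u,v|q)/[(q;q)_m(q;q)_n]$ needed to match the Hahn-polynomial expansion supplied by Theorem~\ref{principalliuhthm}.
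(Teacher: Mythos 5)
Your proposal is correct and follows the route the paper itself intends: the paper omits the proof of this theorem, remarking only that it follows ``by the same argument'' as Theorem~\ref{liuqint}, and your argument is exactly that argument run with $k=2$ in Theorem~\ref{principalliuhthm} --- verify the two $q$-partial differential equations in the pairs $(a,b)$ and $(c,d)$ (each a carbon copy of the computation in Theorem~\ref{liuqint}, since the extra factor $(\beta cx;q)_\infty/((cx;q)_\infty(dx;q)_\infty)$ is constant in $a,b$ and vice versa), then expand $I$ in $\Phi^{(\alpha)}_m(a,b|q)\Phi^{(\beta)}_n(c,d|q)$ and identify the coefficients at $a=c=0$. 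The only place you deviate from the paper's template is the coefficient identification: the pattern of Theorem~\ref{liuqint} would apply $\partial_{q,b}^m\partial_{q,d}^n$ at $b=d=0$ and invoke Proposition~\ref{wangpp}, using $W_{m+n}(0,0,u,v|q)=h_{m+n}(u,v|q)$, whereas you instead evaluate $I(0,b,0,d)$ by the Andrews--Askey formula and recognise the result as the $\alpha=\beta=0$, $t=1$ case of Theorem~\ref{genthmb}, $\sum_{N}h_N(b,d|q)h_N(u,v|q)/(q;q)_N=(bduv;q)_\infty/(bu,bv,du,dv;q)_\infty$, then reindex the diagonal via ${m+n\brack m}_q/(q;q)_{m+n}=1/[(q;q)_m(q;q)_n]$; both computations yield the same $\lambda_{m,n}$, so this is a harmless and correct variant. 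One cosmetic point: citing Hartogs' theorem does not by itself give analyticity of the $q$-integral --- as in the paper, one should note that the $q$-integral is a uniformly convergent series of functions analytic in $(a,b,c,d)$ near the origin, which is the easy estimate implicitly used in Theorem~\ref{liuqint}.
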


%%%%%%%%%%%%%%%%%%%%%%%%%%%%%%%%%%%%%%%%%%%%%%%%%%%%%%%%%%%%%%%%%%%%%%%%%%%%%%%%%%%%%%%%%%%%%%%%%%%%%%%%%%%%%%%%%%%%%%%%%%%%
\section{Some applications of Theorem~\ref{liuint}}
%%%%%%%%%%%%%%%%%%%%%%%%%%%%%%%%%%%%%%%%%%%%%%%%%%%%%%%%%%%%%%%%%%%%%%%%%%%%%%%%%%%%%%%%%%%%%%%%%%%%%%%%%%%%%%%%%%%%%%%%%%%
In this section we will discuss some applications of Theorem~\ref{liuint} to $q$-beta integrals. We begin with the
following proposition.
\begin{prop}\label{qintpp} If $f(x)$ is a function of $x$ such that
$
\lim_{n\to \infty}|{f(xq^n)}/{f(xq^{n-1})}|\le 1,
$
then, for  $\max\{|u|, |v|\}<1,$  the $q$-integral
\begin{equation}
\int_{u}^v \frac{(qx/u, qx/v; q)_\infty f(x)}{(x e^{i\theta}, xe^{-i\theta}; q)_\infty}d_q x,
\label{qinteqn1}
\end{equation}
converges absolutely and uniformly on $\theta \in [0, \pi]$, provided that
there are no zero factors in the denominator of the $q$-integral.
\end{prop}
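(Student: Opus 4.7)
The plan is to expand the $q$-integral as its defining Jackson series and then apply the Weierstrass $M$-test, producing a $\theta$-independent upper bound on each summand that is summable. Writing
\[
G(x,\theta)=\frac{(qx/u, qx/v;q)_\infty f(x)}{(xe^{i\theta}, xe^{-i\theta};q)_\infty},
\]
the $q$-integral in \reff{qinteqn1} equals $(1-q)\sum_{n=0}^\infty q^n\bigl[vG(vq^n,\theta)-uG(uq^n,\theta)\bigr]$, so it suffices to bound $q^n|G(vq^n,\theta)|$ and $q^n|G(uq^n,\theta)|$ by the terms of a single convergent series, uniformly in $\theta\in[0,\pi]$.

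The first step is to control the numerator of $G$. At $x\in\{uq^n,vq^n\}$ the factors $(qx/u;q)_\infty$ and $(qx/v;q)_\infty$ reduce to expressions such as $(q^{n+1};q)_\infty$ and $(q^{n+1}v/u;q)_\infty$, each bounded in absolute value by a constant independent of $n$ (the first by $1$, the second by $(-|v/u|;q)_\infty<\infty$). For $|f|$, the ratio hypothesis gives, for any prescribed $\epsilon>0$, a constant $C_\epsilon$ with $|f(vq^n)|+|f(uq^n)|\leq C_\epsilon(1+\epsilon)^n$ for all $n\geq 0$. For the denominator, the elementary inequality $|1-xe^{i\theta}q^k|\geq 1-|x|q^k$ yields
\[
|(xe^{\pm i\theta};q)_\infty|\geq (|x|;q)_\infty\geq (M;q)_\infty>0,\qquad M:=\max\{|u|,|v|\}<1,
\]
for all $n\geq 0$ and all $\theta\in[0,\pi]$. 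This is precisely the step that eliminates the $\theta$-dependence from the lower bound.

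Assembling these estimates and choosing $\epsilon>0$ so that $r:=q(1+\epsilon)<1$ gives an inequality of the form $q^n\bigl|vG(vq^n,\theta)-uG(uq^n,\theta)\bigr|\leq Kr^n$, with $K$ independent of $n$ and $\theta$. Since $\sum_{n\geq 0}r^n<\infty$, the Weierstrass $M$-test delivers the desired absolute and uniform convergence on $[0,\pi]$. The main obstacle is not a deep estimate but careful bookkeeping: verifying that every factor appearing in $G(xq^n,\theta)$ admits a bound uniform in $(n,\theta)$, the only genuinely growing piece being $|f|$, whose growth is controlled by the ratio hypothesis and then absorbed by $q^n$ via the choice of $\epsilon$.
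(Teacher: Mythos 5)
Your proof is correct and follows essentially the same route as the paper: both expand the Jackson $q$-integral into its defining series and dominate each term by a $\theta$-independent convergent series (the paper obtains its majorant via the bound $|(ze^{i\theta},ze^{-i\theta};q)_n|\le(-|z|;q)_n^2$, the $q$-binomial theorem and the ratio test, while you use elementary factorwise bounds and a geometric growth estimate for $f$). The differences are only cosmetic bookkeeping, so no further comparison is needed.
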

\begin{proof} From the definition of the $q$-integral, we know that the $q$-integral in
(\ref{qinteqn1}) can be written in the form
\begin{align*}
&\frac{(1-q)v (qv/u, q; q)_\infty}{(v e^{i\theta}, v e^{-i\theta}; q)_\infty}
\sum_{n=0}^\infty \frac{(v e^{i\theta}, v e^{-i\theta}; q)_n f(q^n v)q^n}{(q, qv/u; q)_n}\\
&-\frac{(1-q)u (qu/v, q; q)_\infty}{(u e^{i\theta}, u e^{-i\theta}; q)_\infty}
\sum_{n=0}^\infty \frac{(u e^{i\theta}, u e^{-i\theta}; q)_n f(q^n u)q^n}{(q, qu/v; q)_n}.
\end{align*}
It is easily seen that $|(z e^{i\theta}, z e^{-i\theta}; q)_n|\le (-|z|; q)_n^2$, and
using the $q$-binomial theorem, we know that  for $ |z|<1,$
\[
\Big| \frac{1}{(z e^{i\theta}, z e^{-i\theta}; q)_\infty}\Big|\le \frac{1}{(|z|; q)^2_\infty}.
\]
Thus, we have
\begin{align*}
\Big|\int_{u}^v \frac{(qx/u, qx/v; q)_\infty f(x)}{(x e^{i\theta}, xe^{-i\theta}; q)_\infty}d_q x\Big|
&\le \frac{(1-q)|v||(qv/u; q)_\infty|}{(|v|; q)_\infty^2}
\sum_{n=0}^\infty \frac{(-|v|; q)_n^2 |f(q^nv)|q^n}{|(q, qv/u; q)_n|}\\
&+\frac{(1-q)|u||(qu/v; q)_\infty|}{(|u|; q)_\infty^2}
\sum_{n=0}^\infty \frac{(-|u|; q)_n^2 |f(q^nu)|q^n}{|(q, qu/v; q)_n|}.
\end{align*}
Using the ratio test, we find that the series on the right-hand side of the above equation is convergent
for $\max\{|u|, |v|\}<1.$ Thus, the series in (\ref{qinteqn1}) converges absolutely and uniformly on $\theta \in [0, \pi]$.
\end{proof}
\begin{defn}\label{defn1}
For$\ x=\cos \T$, we define $h(x; a)$ and
 $ h(x; a_1, a_2, \ldots, a_m)$  as follows
 \begin{align*}
&h(x; a)=(ae^{i\T}, ae^{-i\T}; q)_\infty=\prod_{k=0}^\infty (1-2q^k ax+q^{2k}a^2) \\
&h(x; a_1, a_2, \ldots, a_m)=h(x; a_1)h(x; a_2)\cdots h(x; a_m).
 \end{align*}
\end{defn}
The following important integral evaluation is due to Askey and Wilson (see, for example \cite[p. 154]{Gas+Rah}).
\begin{prop}\label{ppasw}  For $\max\{|a|, |b|, |c|, |d|\}<1$, we have the integral formula
\begin{equation*}
I(a, b, c, d):= \int_{0}^{\pi} \frac{h(\cos 2\T; 1)d\T}{h(\cos \T; a, b, c,  d)}
=\frac{2\pi (abcd; q)_\infty}{(q, ab, ac, ad, bc, bd, cd;q)_\infty}.
\end{equation*}
\end{prop}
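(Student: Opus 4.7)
The plan is to apply Theorem~\ref{principalliuhthm} with $k=2$ and $\alpha_1=\alpha_2=0$ (so that $\Phi_n^{(0)}=h_n$) to the normalized integral
\[
L(a,b,c,d):=(ab,\,cd;q)_\infty\,I(a,b,c,d).
\]
For $\max\{|a|,|b|,|c|,|d|\}<1$ the integrand of $I$ is jointly analytic in the four parameters uniformly in $\theta\in[0,\pi]$, so by Hartogs' theorem (Theorem~\ref{hartogthm}) both $I$ and $L$ are holomorphic in a polydisc about the origin of $\mathbb{C}^4$.

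First I would verify the two needed $q$-partial differential equations $\partial_{q,a}\{L\}=\partial_{q,b}\{L\}$ and $\partial_{q,c}\{L\}=\partial_{q,d}\{L\}$. A short manipulation gives
\[
\frac{1}{h(\cos\theta;a)}-\frac{1}{h(\cos\theta;qa)}=\frac{a(2\cos\theta-a)}{h(\cos\theta;a)},
\]
which, after $q$-differentiation under the integral, yields
\[
\partial_{q,a}\{I\}-\partial_{q,b}\{I\}=\int_0^\pi\frac{h(\cos 2\theta;1)(b-a)\,d\theta}{h(\cos\theta;a,b,c,d)}=(b-a)\,I.
\]
Using $(ab;q)_\infty=(1-ab)(qab;q)_\infty$ one checks directly that $\partial_{q,a}\{(ab;q)_\infty I\}=(qab;q)_\infty[\partial_{q,a}\{I\}-bI]$, and therefore
\[
\partial_{q,a}\{L\}-\partial_{q,b}\{L\}=(cd;q)_\infty(qab;q)_\infty\bigl[(\partial_{q,a}\{I\}-\partial_{q,b}\{I\})-(b-a)I\bigr]=0;
\]
the $(c,d)$ identity follows by the same calculation with $(a,b)\leftrightarrow(c,d)$. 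Theorem~\ref{principalliuhthm} therefore produces constants $\lambda_{m,n}$ with
\[
L(a,b,c,d)=\sum_{m,n\ge 0}\lambda_{m,n}\,h_m(a,b|q)\,h_n(c,d|q).
\]

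To identify the $\lambda_{m,n}$, set $a=c=0$ and use $h_n(0,y|q)=y^n$ to reduce the problem to evaluating the two-parameter integral $I(0,b,0,d)$. I would handle this by iterating the same scheme once more on the pair $(b,d)$: expand $(bd;q)_\infty I(0,b,0,d)$ in $h_n(b,d|q)$ and read off the coefficients from the one-parameter base case $I(0,0,0,d)$, which a Jacobi triple product expansion of $h(\cos 2\theta;1)=(e^{2i\theta},e^{-2i\theta};q)_\infty$ together with $\int_0^\pi e^{2ik\theta}\,d\theta=\pi\delta_{k,0}$ evaluates to $2\pi/(q;q)_\infty$, independent of $d$. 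Propagating back through the two expansion layers yields $\lambda_{m,n}=2\pi\,\delta_{m,n}/((q;q)_\infty(q;q)_n)$. Collapsing the resulting diagonal sum via the $\alpha=\beta=0, t=1$ specialization of Theorem~\ref{genthmb},
\[
\sum_{n\ge 0}\frac{h_n(a,b|q)\,h_n(c,d|q)}{(q;q)_n}=\frac{(abcd;q)_\infty}{(ac,ad,bc,bd;q)_\infty},
\]
and dividing by $(ab,cd;q)_\infty$ produces the asserted closed form. The main obstacle is anchoring the nested expansions with the base evaluation $I(0,0,0,d)=2\pi/(q;q)_\infty$: this lies outside the $q$-PDE machinery of Theorem~\ref{principalliuhthm} and must be supplied externally by the Jacobi triple product, whereas everything else reduces to routine $q$-derivative manipulations under the integral and a single application of Theorem~\ref{genthmb}.
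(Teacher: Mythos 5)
The paper itself offers no proof of Proposition~\ref{ppasw}: it is the classical Askey--Wilson integral, quoted with a citation to Gasper and Rahman, and it is then used as an input in Section~5 (to get Theorem~\ref{aaliuthm} and ultimately the Nassrallah--Rahman integral). So your attempt is necessarily a different route, and it is a natural one: prove the Askey--Wilson evaluation itself with the paper's own machinery. The $q$-PDE part of your argument is correct. The identity $1/h(\cos\theta;a)-1/h(\cos\theta;qa)=a(2\cos\theta-a)/h(\cos\theta;a)$ gives $\partial_{q,a}\{I\}-\partial_{q,b}\{I\}=(b-a)I$, your product-rule computation for $(ab;q)_\infty I$ is right, so $L=(ab,cd;q)_\infty I$ does satisfy the two $q$-PDEs and Theorem~\ref{principalliuhthm} (with $k=2$, $\alpha_1=\alpha_2=0$) applies; the collapse of the diagonal sum via the $\alpha=\beta=0$, $t=1$ case of Theorem~\ref{genthmb} is also correct, and no circularity arises since Theorems~\ref{principalliuhthm} and \ref{genthmb} are proved independently of Proposition~\ref{ppasw}.

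The genuine gap is the anchoring evaluation. Reading off the coefficients at $a=c=0$ (and then $b=0$ in the second layer) requires knowing $I(0,0,0,d)$ as a function of $d$ in a neighbourhood of $0$, and your description of that base case is too optimistic: the Jacobi triple product expansion of $h(\cos2\theta;1)$ together with $\int_0^\pi e^{2ik\theta}\,d\theta=\pi\delta_{k,0}$ evaluates only $I(0,0,0,0)=2\pi/(q;q)_\infty$, because for $d\neq 0$ the factor $1/h(\cos\theta;d)$ must itself be expanded (by the $q$-binomial theorem or the continuous $q$-Hermite generating function), and this introduces Fourier modes $e^{ij\theta}$ with $j$ odd, whose integrals over $[0,\pi]$ do not vanish individually. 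Showing that every positive power of $d$ cancels is exactly the orthogonality statement $\int_0^\pi h(\cos2\theta;1)H_n(\cos\theta|q)\,d\theta=0$ for $n\ge 1$ (equivalently the one-parameter Askey--Wilson integral), which needs either the symmetry $\theta\mapsto\pi-\theta$ plus a finite $q$-binomial summation such as $\sum_j(-1)^j{2N\brack j}_q q^{\binom j2}x^j=(x;q)_{2N}$ evaluated at $x=q^{-N}$, or some separate argument (e.g.\ proving $\partial_{q,d}\{I(0,0,0,d)\}=0$) that the integral is independent of $d$. You correctly flag this input as external, but as written the proof of the base case is missing, and it is of comparable depth to the simplest Askey--Wilson-type integral itself; once it is supplied (it is classical), the rest of your argument goes through.
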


The main result of this section is the following general $q$-integral identity.
\begin{thm}\label{aaliuthm} If $\max\{|a|, |b|, |c|,  |u|, |v|\}<1,$ and $f(x)$ is a
function of $x$ such that $\lim_{n\to \infty}|{f(xq^n)}/{f(xq^{n-1})}|\le 1$,   then, we have
\begin{align*}
&\frac{(q; q)_\infty}{2\pi}\int_{0}^{\pi} \frac{h(\cos 2\theta; 1)}
{h(\cos \theta; a, b,c)} \left\{\int_{u}^v \frac{(qx/u, qx/v; q)_\infty f(x)}{(x e^{i\theta}, xe^{-i\theta}; q)_\infty}d_q x\right\} d\theta\\
&=\frac{1}{(ab, ac, bc; q)_\infty}\int_{u}^v \frac{(qx/u, qx/v, abcx; q)_\infty f(x)}{(ax, bx, cx; q)_\infty}d_q x.
\end{align*}
\end{thm}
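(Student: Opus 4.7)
The plan is to prove Theorem~\ref{aaliuthm} by interchanging the order of the $\theta$-integration and the inner $q$-integration on the left-hand side, and then evaluating the resulting $\theta$-integral using the Askey--Wilson formula of Proposition~\ref{ppasw}. The crucial algebraic observation is that, by Definition~\ref{defn1},
\[
h(\cos\theta; a, b, c)\cdot (xe^{i\theta}, xe^{-i\theta}; q)_\infty = h(\cos\theta; a, b, c, x),
\]
so the combined $\theta$-denominator in the LHS integrand is exactly the one appearing in the Askey--Wilson integral with the fourth parameter taken to be $x$.

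First I would invoke Proposition~\ref{qintpp}, with the function $f(x)$ of the present theorem, to conclude that the inner $q$-integral in $x$ converges absolutely and uniformly in $\theta\in[0,\pi]$. Since the outer factor $h(\cos 2\theta; 1)/h(\cos\theta; a, b, c)$ is continuous and hence integrable on $[0,\pi]$ (each of $|a|, |b|, |c|<1$), this uniform absolute convergence legitimizes swapping the $\theta$-integral with the $q$-integral over $x$ via Fubini's theorem.

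After the interchange, the $q$-integral reduces to a sum over the $q$-grid points $x\in\{uq^n, vq^n : n\ge 0\}$, all of modulus strictly less than one by the hypothesis $\max\{|u|,|v|\}<1$. For each such $x$, Proposition~\ref{ppasw} applied with $d=x$ yields
\[
\int_0^\pi \frac{h(\cos 2\theta; 1)}{h(\cos\theta; a, b, c, x)}\,d\theta = \frac{2\pi(abcx; q)_\infty}{(q, ab, ac, ax, bc, bx, cx; q)_\infty}.
\]
Substituting this back and factoring the $\theta$-independent prefactor $1/(ab, ac, bc; q)_\infty$ out of the $q$-integral, the $(q; q)_\infty/(2\pi)$ multiplier out front cancels cleanly against the $2\pi/(q; q)_\infty$ produced by the Askey--Wilson evaluation, and the remaining integrand matches the right-hand side of Theorem~\ref{aaliuthm} exactly.

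The only real obstacle is the rigorous justification of Fubini's theorem in this $q$-integral/contour-integral hybrid setting. Once the absolute and uniform convergence supplied by Proposition~\ref{qintpp} is in hand, the remainder of the proof is a purely algebraic substitution.
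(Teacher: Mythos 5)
Your proposal is correct and follows essentially the same route as the paper: both hinge on the Askey--Wilson integral of Proposition~\ref{ppasw} with the fourth parameter set equal to $x$ on the $q$-grid points $uq^k, vq^k$, with the interchange of the $\theta$-integral and the $q$-integral (i.e.\ the defining series) justified by the uniform convergence from Proposition~\ref{qintpp} together with the elementary bounds on $h(\cos 2\theta;1)$ and $1/h(\cos\theta;a,b,c)$. The paper merely presents the same computation in the opposite order, starting from the Askey--Wilson evaluation at $x=uq^k$ and $x=vq^k$ and summing, rather than interchanging first and evaluating afterwards.
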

\begin{proof}
Letting $d=x$ in the Askey--Wilson integral and  then multiplying both sides by $f(x)(qx/u, qx/v; q)_\infty,$ we obtain
\begin{align}
&\frac{(q; q)_\infty}{2\pi}\int_{0}^{\pi} \frac{h(\cos 2\theta; 1)(qx/u, qx/v; q)_\infty f(x)  d\theta}
{h(\cos \theta; a, b,c)(x e^{i\theta}, xe^{-i\theta}; q)_\infty} \label{qinteqn2}\\
&=\frac{(qx/u, qx/v, abcx; q)_\infty f(x)}{(ab, ac, bc, ax, bx, cx; q)_\infty}.\nonumber
\end{align}
For the sake of brevity, we temporarily use the compact symbol $A_k(u, v, \theta)$ to denote
\[
\frac{(q^{k+1}u/v, q^{k+1}; q)_\infty f(uq^k)q^k}{( uq^k e^{i\theta}, uq^k e^{-i\theta}; q)_\infty}.
\]
Putting $x=uq^k$ in (\ref{qinteqn2}), multiplying both sides by $q^k$,   and summing the resulting equation
over $0\le k <\infty,$ we obtain
\begin{align}
&\frac{(q; q)_\infty}{2\pi}\sum_{k=0}^\infty \int_{0}^{\pi} \frac{h(\cos 2\theta; 1)A_k(u, v, \theta) d\theta}
{h(\cos \theta; a, b,c)} \label{qinteqn3} \\
&=\frac{1}{(ab, ac, bc; q)_\infty}\sum_{k=0}^\infty \frac{(q^{k+1}u/v, q^{k+1}, abcuq^k; q)_\infty f(uq^k)q^k}{( auq^k, buq^k, cuq^k; q)_\infty}.
\nonumber
\end{align}
Using the triangle inequality for complex numbers,  we easily find that $|h(\cos \theta; 1)|\le(-1; q)_\infty,$ and by the $q$-binomial
theorem, we can find that for $\max\{|a|, |b|, |c|\}<1,$
\[
\Big|\frac{1}{h(\cos \theta; a, b, c)}\Big|\le \frac{1}{(|a|, |b|, |c|; q)_\infty^2}.
\]
By Proposition~\ref{qintpp}, we know that $\sum_{k \ge 0} A_k(u, v, \theta)$ is absolutely and uniformly convergent on
$\theta \in [0, \pi]$. Thus, we can interchange the order of summation and the integral in (\ref{qinteqn3}) to find that
\begin{align}
&\frac{(q; q)_\infty}{2\pi} \int_{0}^{\pi} \frac{h(\cos 2\theta; 1) }{h(\cos \theta; a, b,c)}
\(\sum_{k=0}^\infty A_k(u, v, \theta)\)d\theta
 \label{qinteqn4}\\
&=\frac{1}{(ab, ac, bc; q)_\infty}\sum_{k=0}^\infty \frac{(q^{k+1}u/v, q^{k+1}, abcuq^k; q)_\infty f(uq^k)q^k}{( auq^k, buq^k, cuq^k; q)_\infty}.
\nonumber
\end{align}
Interchanging $u$ and $v$ in the above equation, we conclude that
\begin{align}
&\frac{(q; q)_\infty}{2\pi} \int_{0}^{\pi} \frac{h(\cos 2\theta; 1) }{h(\cos \theta; a, b,c)}
\(\sum_{k=0}^\infty A_k(v, u, \theta)\)d\theta
 \label{qinteqn5}\\
&=\frac{1}{(ab, ac, bc; q)_\infty}\sum_{k=0}^\infty \frac{(q^{k+1}v/u, q^{k+1}, abcvq^k; q)_\infty f(vq^k)q^k}{( avq^k, bvq^k, cvq^k; q)_\infty}.
\nonumber
\end{align}
Multiplying (\ref{qinteqn4}) by $-u(1-q)$ and multiplying (\ref{qinteqn5}) by $v(1-q)$, then adding the two
resulting equations together and using the definition of $q$-integral, we complete the proof of the theorem.
\end{proof}
Using Theorem~\ref{aaliuthm}, we can derive the following interesting integral formula.
\begin{thm}\label{liuqintthm} For $\max\{|a|, |b|, |c|, |d|,  |u|, |v|\}<1,$
we have
\begin{align*}
&\frac{(q; q)_\infty}{2\pi}\int_{0}^{\pi}
\frac{h(\cos 2\theta; 1) }
{h(\cos \theta; a, b, c, u, v)} {_3\phi_2}\({{\alpha, ve^{i\theta}, ve^{-i\theta}}\atop{\alpha dv, uv}}; q, du\) d\theta\\
&=\frac{(dv; q)_\infty}
{(1-q)v(q, u/v, qv/u, \alpha dv, uv, ab, ac, bc; q)_\infty}\\
&\qquad\times{\int_u^v \frac{(qx/u, qx/v, abcx, \alpha dx; q)_\infty}{(ax, bx, cx, dx; q)_\infty}d_q x}.
\end{align*}
\end{thm}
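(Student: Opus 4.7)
The plan is to specialize Theorem~\ref{aaliuthm} by choosing $f(x)=(\alpha dx;q)_\infty/(dx;q)_\infty$, and then to evaluate the resulting inner Jackson $q$-integral in closed form using Theorem~\ref{liuint}. First I would verify the hypothesis of Proposition~\ref{qintpp} needed by Theorem~\ref{aaliuthm}: since
\[
\left|\frac{f(xq^n)}{f(xq^{n-1})}\right|=\left|\frac{1-dxq^{n-1}}{1-\alpha dxq^{n-1}}\right|\longrightarrow 1
\]
as $n\to\infty$, this $f$ is admissible. Substituting this $f$ into the right-hand side of Theorem~\ref{aaliuthm} already produces
\[
\frac{1}{(ab,ac,bc;q)_\infty}\int_u^v\frac{(qx/u,qx/v,abcx,\alpha dx;q)_\infty}{(ax,bx,cx,dx;q)_\infty}d_qx,
\]
which is (up to the explicit constant prefactor in the statement) the right-hand side of Theorem~\ref{liuqintthm}.

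The main computational step is to evaluate the inner Jackson $q$-integral appearing on the left-hand side of Theorem~\ref{aaliuthm}, namely
\[
\int_u^v\frac{(qx/u,qx/v,\alpha dx;q)_\infty}{(dx,xe^{i\theta},xe^{-i\theta};q)_\infty}\,d_qx.
\]
This has exactly the shape of the $q$-integral in Theorem~\ref{liuint} with $(a,b,c)$ replaced by $(d,e^{i\theta},e^{-i\theta})$. Under this specialization, $bc=1$ so $bcuv$ collapses to $uv$, and the resulting Pochhammer factors simplify as $\alpha av=\alpha dv$, $au=du$, $bv=ve^{i\theta}$, $cv=ve^{-i\theta}$, $bu=ue^{i\theta}$, $cu=ue^{-i\theta}$. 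Combining $(ue^{\pm i\theta},ve^{\pm i\theta};q)_\infty$ as $h(\cos\theta;u,v)$, Theorem~\ref{liuint} yields
\[
\frac{(1-q)v(q,u/v,qv/u,\alpha dv,uv;q)_\infty}{(dv;q)_\infty\, h(\cos\theta;u,v)}\,{_3\phi_2}\!\left({\alpha,ve^{i\theta},ve^{-i\theta}\atop \alpha dv,uv};q,du\right).
\]

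I would then substitute this closed-form expression back into the left-hand side of Theorem~\ref{aaliuthm}. The factor $1/h(\cos\theta;u,v)$ merges with the existing $1/h(\cos\theta;a,b,c)$ to give $1/h(\cos\theta;a,b,c,u,v)$, and the remaining $\theta$-independent prefactor $(1-q)v(q,u/v,qv/u,\alpha dv,uv;q)_\infty/(dv;q)_\infty$ pulls outside the $\theta$-integral. Equating the two sides of Theorem~\ref{aaliuthm} after these substitutions and dividing through by this constant prefactor produces exactly Theorem~\ref{liuqintthm}.

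The main obstacle is purely bookkeeping: one must track carefully how the $(dv;q)_\infty$ factor emerging as a denominator from Theorem~\ref{liuint} ends up as a numerator on the right-hand side of Theorem~\ref{liuqintthm}, and one must correctly pair the Askey--Wilson denominator with the newly produced Pochhammers $(ue^{\pm i\theta},ve^{\pm i\theta};q)_\infty$ to obtain the full $h(\cos\theta;a,b,c,u,v)$. No analytic difficulties arise beyond those already handled in the proofs of Theorems~\ref{liuint} and~\ref{aaliuthm}, since the convergence hypothesis on $f$ has already been checked.
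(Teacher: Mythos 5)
Your proposal is correct and follows essentially the same route as the paper: choose $f(x)=(\alpha dx;q)_\infty/(dx;q)_\infty$ in Theorem~\ref{aaliuthm}, then evaluate the inner Jackson $q$-integral by Theorem~\ref{liuint} with $(a,b,c)$ replaced by $(d,e^{i\theta},e^{-i\theta})$, and combine. Your explicit check of the ratio hypothesis of Proposition~\ref{qintpp} is a small welcome addition that the paper leaves implicit.
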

\begin{proof} Choosing  $f(x)=(\alpha dx; q)_\infty/(dx; q)_\infty$ in Theorem~\ref{aaliuthm}, we find that
\begin{align*}
&\frac{(q; q)_\infty}{2\pi}\int_{0}^{\pi} \frac{h(\cos 2\theta; 1)}
{h(\cos \theta; a, b,c)} \left\{\int_{u}^v \frac{(qx/u, qx/v, \alpha dx; q)_\infty }{(dx, x e^{i\theta}, xe^{-i\theta}; q)_\infty}d_q x\right\} d\theta\\
&=\frac{1}{(ab, ac, bc; q)_\infty}\int_{u}^v \frac{(qx/u, qx/v, abcx, \alpha dx; q)_\infty }{(ax, bx, cx, dx; q)_\infty}d_q x.
\end{align*}
If $(a, b, c)$ is replaced by $(d, e^{i\theta}, e^{-i\theta})$ in Theorem~\ref{liuint}, we conclude that
\begin{align*}
\int_{u}^v \frac{(qx/u, qx/v, \alpha dx; q)_\infty}{(dx, xe^{i\theta}, xe^{-i\theta}; q)_\infty} d_q x&=
\frac{(1-q)v (q, u/v, qv/u, \alpha d v, uv; q)}{(dv; q)_\infty h(\cos \theta; u, v)}\\
&\qquad\times {_3\phi_2}\({{\alpha, ve^{i\theta}, ve^{-i\theta}}\atop{\alpha dv, uv}}; q, du\).
\end{align*}
Combining the above two equations, we complete the proof of Theorem~\ref{liuqintthm}.
\end{proof}
Theorem~\ref{liuqintthm}  implies the Ismail-Stanton-Viennot integral (see, for example \cite{Liu1997}) as a special case.
\begin{prop}\label{isvintpp} For $\max\{|a|, |b|, |c|, |u|, |v|\}<1,$
we have
\begin{align*}
&\frac{(q; q)_\infty}{2\pi}\int_{0}^{\pi}
\frac{h(\cos 2\theta; 1) d\theta}
{h(\cos \theta; a, b, c, u, v)}\\
&=\frac{( abcv,  bcuv; q)_\infty}
{(ab, ac, bc, av, bu, bv, cu, cv, uv; q)_\infty}
{_3\phi_2}\({{bc, bv, cv}\atop{abcv, bcuv}}; q, au\).
\end{align*}
\end{prop}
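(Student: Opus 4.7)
My plan is to obtain Proposition~\ref{isvintpp} as a direct specialization of Theorem~\ref{liuqintthm}, followed by an evaluation of the resulting $q$-integral via Theorem~\ref{liuint}.

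First I would set $d=0$ in Theorem~\ref{liuqintthm}. On the left-hand side the ${}_3\phi_2\bigl({\alpha, ve^{i\theta}, ve^{-i\theta} \atop \alpha dv, uv}; q, du\bigr)$ has argument $du=0$, so only the $n=0$ term of the series survives and the ${}_3\phi_2$ collapses to $1$. Thus the left-hand side reduces exactly to the left-hand side of Proposition~\ref{isvintpp}. On the right-hand side, $(dv;q)_\infty$ and $(\alpha dv;q)_\infty$ both become $1$, and the integrand reduces to $(qx/u, qx/v, abcx;q)_\infty/(ax, bx, cx;q)_\infty$, leaving
\[
\frac{1}{(1-q)v(q, u/v, qv/u, uv, ab, ac, bc;q)_\infty}\int_u^v \frac{(qx/u, qx/v, abcx;q)_\infty}{(ax, bx, cx;q)_\infty}\,d_q x.
\]

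Next I would evaluate the remaining $q$-integral using Theorem~\ref{liuint} with $\alpha$ specialized to $bc$ (so that $\alpha a = abc$ matches the factor $abcx$ in the numerator). This gives
\[
\int_u^v \frac{(qx/u, qx/v, abcx;q)_\infty}{(ax, bx, cx;q)_\infty}\,d_q x
= \frac{(1-q)v(q, u/v, qv/u, abcv, bcuv;q)_\infty}{(av, bu, bv, cu, cv;q)_\infty}\, {_3\phi_2}\!\(\!{bc, bv, cv \atop abcv, bcuv};q, au\!\).
\]
Substituting this into the expression above, the factor $(1-q)v(q, u/v, qv/u;q)_\infty$ cancels, and the remaining infinite products rearrange into
\[
\frac{(abcv, bcuv;q)_\infty}{(ab, ac, bc, av, bu, bv, cu, cv, uv;q)_\infty}\, {_3\phi_2}\!\(\!{bc, bv, cv \atop abcv, bcuv};q, au\!\),
\]
which is precisely the right-hand side of Proposition~\ref{isvintpp}.

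There is essentially no obstacle here beyond spotting the correct specialization $d=0$; the substance of the argument has already been done inside Theorem~\ref{liuqintthm} and Theorem~\ref{liuint}. The one point that requires a brief justification is the collapse of the ${}_3\phi_2$ at $d=0$: although the lower parameter $\alpha dv$ also vanishes, since the argument $du$ is $0$ only the constant term of the series contributes and the value is unambiguously $1$. Once that is checked, combining Theorems~\ref{liuqintthm} and \ref{liuint} with the elementary product-simplification gives the claimed identity.
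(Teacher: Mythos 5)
Your proposal is correct and follows essentially the same route as the paper: the paper specializes Theorem~\ref{liuqintthm} by taking $\alpha=1$ (which kills the ${}_3\phi_2$ since $(1;q)_n=0$ for $n\ge 1$), whereas you take $d=0$ (which kills it via the vanishing argument $du$), but both specializations yield exactly the same intermediate identity. From there both arguments conclude identically by evaluating the remaining $q$-integral with Theorem~\ref{liuint} at $\alpha=bc$ and simplifying the infinite products.
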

\begin{proof} Taking $\alpha=1$ in Theorem~\ref{liuqintthm}, we immediately find that
\begin{align*}
&\frac{(q; q)_\infty}{2\pi  }
\int_{0}^{\pi}
\frac{h(\cos 2\theta; 1) d\theta}
{h(\cos \theta; a, b, c, u, v)}\\
&=\frac{1}{(1-q)v(q, u/v, qv/u, ab, ac, bc, uv; q)_\infty}\int_{u}^v \frac{(qx/u, qx/v, abcx; q)_\infty }{(ax, bx, cx; q)_\infty}d_q x.
\end{align*}
Taking $\alpha=bc$ in Theorem~\ref{liuint}, we are led to the $q$-integral formula.
\begin{align*}
\int_{u}^v \frac{(qx/u, qx/v, abcx; q)_\infty}{(ax, bx, cx; q)_\infty} d_q x&=
\frac{(1-q)v (q, u/v, qv/u, abc v, bcuv; q)}{(av, bu, bv, cu, cv; q)_\infty}\\
&\qquad\times {_3\phi_2}\({{bc, bv, cv}\atop{abcv, bcuv}}; q, au\).
\end{align*}
Combining the above two equations, we finish the proof of Proposition~\ref{isvintpp}.
\end{proof}
Theorem~\ref{liuqintthm} also includes  the following $q$-integral as a special case.
\begin{prop}\label{liubetapp}  For $\max\{|a|, |b|, |c|, |d|,  |u|, |v|\}<1,$ we have the identity
\begin{align*}
&\frac{(1-q)v(q, q,  u/v, qv/u, uv; q)_\infty}{2\pi  (du, dv; q)_\infty}
\int_{0}^{\pi}
\frac{h(\cos 2\theta; 1)h(\cos \theta; duv) d\theta}
{h(\cos \theta; a, b, c, u, v)}\\
&=\frac{1}{(ab, ac, bc; q)_\infty}\int_{u}^v \frac{(qx/u, qx/v, abcx, duvx; q)_\infty }{(ax, bx, cx, dx; q)_\infty}d_q x.
\end{align*}
\end{prop}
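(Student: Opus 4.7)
The plan is to obtain Proposition~\ref{liubetapp} as the special case $\alpha=uv$ of Theorem~\ref{liuqintthm}. With this choice, the parameter $uv$ appears simultaneously in the numerator and in the denominator of the ${_3\phi_2}$ in Theorem~\ref{liuqintthm}, so the series collapses to a terminating-looking ${_2\phi_1}$:
\[
{_3\phi_2}\!\left({{uv,\,ve^{i\theta},\,ve^{-i\theta}}\atop{duv^{2},\,uv}};q,du\right)
={_2\phi_1}\!\left({{ve^{i\theta},\,ve^{-i\theta}}\atop{duv^{2}}};q,du\right).
\]

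Next I would sum this ${_2\phi_1}$ by the $q$-Gauss formula. Writing $A=ve^{i\theta}$, $B=ve^{-i\theta}$, $C=duv^{2}$, we check $C/(AB)=du$, so $q$-Gauss applies and yields
\[
{_2\phi_1}\!\left({{ve^{i\theta},\,ve^{-i\theta}}\atop{duv^{2}}};q,du\right)
=\frac{(duve^{-i\theta},\,duve^{i\theta};q)_\infty}{(duv^{2},du;q)_\infty}
=\frac{h(\cos\theta;\,duv)}{(duv^{2},du;q)_\infty}.
\]
Inserting this into the left-hand side of Theorem~\ref{liuqintthm} produces exactly the factor $h(\cos\theta;duv)$ appearing in the integrand of Proposition~\ref{liubetapp}, while the right-hand side of Theorem~\ref{liuqintthm} automatically acquires the factor $(duvx;q)_\infty$ in place of $(\alpha dx;q)_\infty$.

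It then only remains to tidy up the constant prefactors. Multiplying both sides of the specialized Theorem~\ref{liuqintthm} by
\[
\frac{(1-q)v\,(q,u/v,qv/u,duv^{2},uv,ab,ac,bc;q)_\infty}{(dv;q)_\infty}
\]
absorbs the denominator of the right-hand side, and then the factor $(duv^{2};q)_\infty$ cancels between the numerator and the $(duv^{2},du;q)_\infty$ coming from the $q$-Gauss evaluation, leaving only $(du,dv;q)_\infty$ in the denominator of the integral side. Finally, dividing through by $(ab,ac,bc;q)_\infty$ puts the identity into the exact form stated in Proposition~\ref{liubetapp}.

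Conceptually there is no obstacle; the whole proof is a substitution plus a summation. The only point where a little care is needed is bookkeeping of the infinite products after the $\alpha=uv$ specialization, because the factor $(\alpha dv;q)_\infty=(duv^{2};q)_\infty$ produced by Theorem~\ref{liuqintthm} has to cancel cleanly against the denominator produced by the $q$-Gauss sum; once that cancellation is verified, the remaining factors match up to the compact form in the proposition.
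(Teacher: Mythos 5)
Your proposal is correct and is essentially the paper's own proof: the paper likewise obtains Proposition~\ref{liubetapp} by setting $\alpha=uv$ in Theorem~\ref{liuqintthm} and summing the resulting ${_2\phi_1}$ by the $q$-Gauss formula, with the same cancellation of $(duv^{2};q)_\infty$ in the bookkeeping of prefactors.
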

\begin{proof}
Taking $\alpha=uv$ in Theorem~\ref{liuqintthm}, and then using the $q$-Gauss summation in the resulting equation,
we complete the proof of Theorem~\ref{liubetapp}.
\end{proof}
Taking $d=abc$ in Proposition~\ref{liubetapp} and then using Corollary~\ref{liuinta}, we are led to the
Nassrallah-Rahman integral (see, for example \cite[p. 159]{Gas+Rah}).
\begin{prop}\label{nrintpp} For $\max\{|a|, |b|, |c|, |u|, |v|\}<1,$
we have
\begin{align*}
&\frac{(q; q)_\infty}{2\pi}\int_{0}^{\pi}
\frac{h(\cos 2\theta; 1)h(\cos \theta; abcuv) d\theta}
{h(\cos \theta; a, b, c, u, v)}\\
&=\frac{(abcu, abcv, abuv, acuv, bcuv; q)_\infty}
{(ab, ac, au, av, bc, bu, bv, cu, cv, uv; q)_\infty}.
\end{align*}
\end{prop}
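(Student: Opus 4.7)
The plan is to follow the hint stated in the text: the identity should drop out of Proposition~\ref{liubetapp} once we specialize $d=abc$ and then invoke Corollary~\ref{liuinta} to evaluate the resulting $q$-integral in closed form. First I would substitute $d=abc$ throughout Proposition~\ref{liubetapp}. On the left-hand side, the factor $h(\cos\theta; duv)$ becomes $h(\cos\theta; abcuv)$ and $(du, dv; q)_\infty$ becomes $(abcu, abcv; q)_\infty$, so the integral appearing there is precisely the one we wish to evaluate in Proposition~\ref{nrintpp}.

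Next I would examine the $q$-integral on the right-hand side after the same specialization. Its integrand reads
\[
\frac{(qx/u,\, qx/v,\, abcx,\, abcuvx; q)_\infty}{(ax,\, bx,\, cx,\, abcx; q)_\infty},
\]
so the pair $(abcx; q)_\infty$ cancels between numerator and denominator. What remains is exactly the Al-Salam--Verma integrand of Corollary~\ref{liuinta}, with the role of the Al-Salam--Verma parameter played by $abcuv$. Substituting the closed-form value from Corollary~\ref{liuinta} converts the right-hand side of the specialized Proposition~\ref{liubetapp} into a product of infinite $q$-shifted factorials.

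The final step is to solve the resulting equation for the integral and multiply through by $(q;q)_\infty/(2\pi)$. The factors $(1-q)v$ cancel across the two sides, and the pair $(q, u/v, qv/u; q)_\infty$ appears on both sides and drops out; the remaining $q$-products assemble to give precisely
\[
\frac{(abcu,\, abcv,\, abuv,\, acuv,\, bcuv; q)_\infty}{(ab,\, ac,\, au,\, av,\, bc,\, bu,\, bv,\, cu,\, cv,\, uv; q)_\infty},
\]
which is the Nassrallah--Rahman right-hand side.

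The main obstacle is simply the careful accounting of the many infinite $q$-shifted factorials; no new analytic input is needed, since all of the substantive content (the Askey--Wilson integral, the $q$-partial differential equation expansion theorem, the $q$-Gauss summation used in deducing Proposition~\ref{liubetapp} from Theorem~\ref{liuqintthm}) has already been packaged into Proposition~\ref{liubetapp} and Corollary~\ref{liuinta}. One should also note at the outset that the convergence conditions $\max\{|a|,|b|,|c|,|u|,|v|\}<1$ assumed in Proposition~\ref{nrintpp} are compatible with the hypotheses of the two cited results, so the chain of substitutions is legitimate.
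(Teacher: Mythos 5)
Your proposal is correct and follows exactly the route the paper itself takes: specializing $d=abc$ in Proposition~\ref{liubetapp}, cancelling the factor $(abcx;q)_\infty$ so that the remaining $q$-integral is evaluated by the Al-Salam--Verma formula of Corollary~\ref{liuinta}, and then clearing the common factors $(1-q)v(q,u/v,qv/u;q)_\infty$. The bookkeeping of the $q$-shifted factorials, including the observation that $|abc|<1$ keeps the hypotheses of Proposition~\ref{liubetapp} satisfied, is carried out correctly.
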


%%%%%%%%%%%%%%%%%%%%%%%%%%%%%%%%%%%%%%%%%%%%%%%%%%%%%%%%%%%%%%%%%%%%%%%%%%%%%%%%%%%%%%%%%%%%%%%%%%%%%%%%%%%%%%%%%%%%%%%%%%%
\section{A multilinear generating function for the Hahn polynomials }
%%%%%%%%%%%%%%%%%%%%%%%%%%%%%%%%%%%%%%%%%%%%%%%%%%%%%%%%%%%%%%%%%%%%%%%%%%%%%%%%%%%%%%%%%%%%%%%%%%%%
\begin{thm} \label{mghahn} If $\max\{ |a|, |c|, |x_1|, |y_1|, \ldots, |x_k|, |y_k|\}<1,$ then, we have the
following multilinear generating function for the Hahn polynomials:
\begin{align*}
\sum_{n_1, n_2, \ldots, n_k=0}^\infty \frac{(a; q)_{n_1+n_2+\cdots+n_k}\Phi^{(\alpha_1)}_{n_1}(x_1, y_1|q) \Phi^{(\alpha_2)}_{n_2}(x_2, y_2|q)\cdots \Phi^{(\alpha_k)}_{n_k}(x_k, y_k|q)}
{(c; q)_{n_1+n_2+\cdots+n_k}(q; q)_{n_1}(q; q)_{n_2}\cdots (q; q)_{n_k}}\\
=\frac{(a, \alpha_1 x_1, \ldots \alpha_k x_k; q)_\infty}{(c, x_1, y_1, x_2, y_2, \ldots, x_k, y_k)_\infty}
{_{2k+1}\phi_{2k}}\left({{c/a, x_1, y_1, x_2, y_2, \ldots, x_k, y_k}
\atop{\alpha_1 x_1, \alpha_2x_2, \ldots, \alpha_k x_k, 0, 0, \ldots, 0}}; q, a\right).
\end{align*}
\end{thm}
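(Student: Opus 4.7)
The plan is to start from the right-hand side, which I shall call $F$, and transform it directly into the left-hand side by applying Theorem~\ref{genthma} $k$ times together with the $q$-binomial theorem. The alternative, which stays within the spirit of the paper, would be to check that $F$ is analytic near the origin and satisfies the $k$ $q$-partial differential equations $\partial_{q, x_j}\{F\}=\partial_{q, y_j}(1-\alpha_j\eta_{x_j})\{F\}$ by differentiating the $_{2k+1}\phi_{2k}$ termwise, then invoke Theorem~\ref{principalliuhthm} and read off the coefficients by specializing $x_1=\cdots=x_k=0$; but the coefficient extraction of that route amounts to the same calculation.

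Concretely, I would first unpack the $_{2k+1}\phi_{2k}$ as its defining sum over $n\ge 0$ and, for each $j$, merge the $q$-Pochhammers attached to $(x_j,y_j)$ using the elementary identities $(\alpha_j x_j;q)_\infty/(\alpha_j x_j;q)_n=(q^n\alpha_j x_j;q)_\infty$ and $(z;q)_n/(z;q)_\infty=1/(q^n z;q)_\infty$. The $j$-th bundle then collapses to $(q^n\alpha_j x_j;q)_\infty/[(q^n x_j,q^n y_j;q)_\infty]$, which Theorem~\ref{genthma} with $t=q^n$ immediately rewrites as $\sum_{m_j\ge 0}\Phi^{(\alpha_j)}_{m_j}(x_j,y_j|q)\,q^{nm_j}/(q;q)_{m_j}$. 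After doing this for each $j$ and interchanging the $(k+1)$ summations, the product of Hahn polynomials moves outside and what remains is an inner sum
\[
\frac{(a;q)_\infty}{(c;q)_\infty}\sum_{n=0}^\infty\frac{(c/a;q)_n(aq^M)^n}{(q;q)_n}, \qquad M=m_1+\cdots+m_k,
\]
which the $q$-binomial theorem evaluates as $(a;q)_\infty(cq^M;q)_\infty/[(c;q)_\infty(aq^M;q)_\infty]=(a;q)_M/(c;q)_M$. This is precisely the coefficient appearing on the left-hand side.

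The main obstacle is the bookkeeping for the $(k+1)$-fold series and the justification of the interchange of summations; here one uses absolute convergence of each application of Theorem~\ref{genthma} in the polydisc $\max\{|a|,|c|,|x_j|,|y_j|\}<1$, together with the bound $|a|<1$ for the outer $n$-sum (note that for fixed $\max\{|x_j|,|y_j|\}<1$ the inner Hahn expansion is majorised uniformly in $n\ge 0$ since $|q^n|\le 1$). Once the interchange is legal, the identity falls out with no further computation beyond the $q$-binomial theorem and Theorem~\ref{genthma} already stated in the paper.
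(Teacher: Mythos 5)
Your proof is correct, but it takes a genuinely different route from the paper's. The paper treats the right-hand side $F$ as a function of $(x_1,y_1,\ldots,x_k,y_k)$, checks that it satisfies the $k$ equations $\partial_{q,x_j}\{F\}=\partial_{q,y_j}(1-\alpha_j\eta_{x_j})\{F\}$, invokes Theorem~\ref{principalliuhthm} to obtain an expansion in products of Hahn polynomials with unknown coefficients $\lambda_{n_1,\ldots,n_k}$, and then identifies those coefficients by setting $x_1=\cdots=x_k=0$ and appealing to Andrews' summation for the $q$-Lauricella function (with all $b_j=0$), which gives $\lambda_{n_1,\ldots,n_k}=(a;q)_{n_1+\cdots+n_k}/[(c;q)_{n_1+\cdots+n_k}(q;q)_{n_1}\cdots(q;q)_{n_k}]$. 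You instead expand the ${}_{2k+1}\phi_{2k}$ termwise, collapse the $j$-th parameter bundle to $(q^n\alpha_jx_j;q)_\infty/(q^nx_j,q^ny_j;q)_\infty$, apply Theorem~\ref{genthma} with $t=q^n$, interchange the $(k+1)$ sums, and evaluate the remaining $n$-sum by the $q$-binomial theorem as $(cq^M;q)_\infty(a;q)_\infty/[(aq^M;q)_\infty(c;q)_\infty]=(a;q)_M/(c;q)_M$ with $M=m_1+\cdots+m_k$; I verified this computation and it is right. What each approach buys: yours is elementary and self-contained, needing only the single generating function already proved in the paper and the $q$-binomial theorem, and bypassing both the $q$-PDE expansion machinery and Andrews' external result (indeed your calculation implicitly reproves the $b_j=0$ case of Andrews' formula); the cost is the bookkeeping and the justification of the interchange, which your majorisation (the Hahn coefficients are bounded by a constant times $(m+1)\max\{|x_j|,|y_j|\}^m$, uniformly in $n$ since $|q^{nm}|\le 1$, and $|a|<1$ controls the outer sum) handles adequately. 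The paper's route, by contrast, is designed to showcase Theorem~\ref{principalliuhthm}: it shifts all the series manipulation onto the general expansion theorem, at the price of verifying the $q$-partial differential equations for $F$ and importing Andrews' $q$-Lauricella summation for the coefficient identification.
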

\begin{proof}
If we use  $f(x_1, y_1, \ldots, x_k, y_k)$ to denote the right-hand side of the above equation, then,
using the ratio test, we find that $f$ is an analytic function of
$u_1, v_1, \ldots, u_s, v_s$ for $\max\{|x_1|, |y_1|, \ldots, |x_k|, |y_k|\}<1.$
By a direct computation, we deduce that for $j=1, 2\ldots, k$,
\begin{align*}
\partial_{q, x_j}\{f\}&=\partial_{q, y_j}(1-\alpha_j \eta_{x_j})\{f\}\\
&=\frac{(1-\alpha_j)(a, \alpha_1 x_1, \ldots, q\alpha_j x_j, \ldots,  \alpha_k x_k; q)_\infty}{(c, x_1, y_1, x_2, y_2, \ldots, x_k, y_k)_\infty}\\
&\quad\times{_{2k+1}\phi_{2k}}\left({{c/a, x_1, y_1, x_2, y_2, \ldots, x_k, y_k}
\atop{\alpha_1 x_1, \ldots, q\alpha_jx_j, \ldots,  \alpha_k x_k, 0, 0, \ldots, 0}}; q, qa\right).
\end{align*}
Thus, by  Theorem~\ref{principalliuhthm},  there exists a sequence $\{\lambda_{n_1, n_2, \ldots, n_k}\}$ independent
of $x_1, y_1, \ldots, x_k, y_k$ such that $f(x_1, y_1, x_2, y_2, \ldots, x_k, y_k)$ equals
\begin{equation}
\sum_{n_1, n_2, \ldots, n_k=0}^\infty \lambda_{n_1, n_2, \ldots, n_k}
\Phi^{(\alpha_1)}_{n_1}(x_1, y_1|q) \Phi^{(\alpha_2)}_{n_2}(x_2, y_2|q)\cdots \Phi^{(\alpha_k)}_{n_k}(x_k, y_k|q).
\label{mgeneqn}
\end{equation}
Setting $x_1=x_2\cdots=x_k=0$ in the above equation, we immediately obtain
\begin{align}
&\sum_{n_1, n_2, \ldots, n_k=0}^\infty \lambda_{n_1, n_2, \ldots, n_k}
y_{1}^{n_1}y_2^{n_2}\cdots y_{k}^{n_k}\label{mgeneqn1}\\
&=\frac{(a; q)_\infty}{(c, y_1, y_2, \ldots, y_k)_\infty}
{_{k+1}\phi_{k}}\left({{c/a, y_1, y_2, \ldots, y_k}
\atop{0, 0, \ldots, 0}}; q, a\right).\nonumber
\end{align}
Andrews \cite {Andrews1972} has proved the following formula for the $q$-Lauricella function:
\begin{align*}
\sum_{n_1, n_2, \ldots, n_k=0}^\infty \frac{(a; q)_{n_1+n_2+\cdots+n_k}(b_1; q)_{n_1}(b_2; q)_{n_2}\cdots (b_k; q)_{n_k}
y_1^{n_1} y_2^{n_2}\cdots y_k^{n_k}}
{(c; q)_{n_1+n_2+\cdots+n_k}(q; q)_{n_1}(q; q)_{n_2}\cdots (q; q)_{n_k}}\\
=\frac{(a, b_1y_1, b_2y_2, \ldots, b_ky_k; q)_\infty}{(c, y_1, y_2, \ldots, y_k)_\infty}
{_{k+1}\phi_{k}}\left({{c/a, y_1, y_2, \ldots, y_k}
\atop{by_1, by_2, \ldots, by_k}}; q, a\right).
\end{align*}
Taking $b_1=b_2=\cdots=b_k=0$ in the above equation, we conclude that
\begin{align*}
\sum_{n_1, n_2, \ldots, n_k=0}^\infty \frac{(a; q)_{n_1+n_2+\cdots+n_k}
y_1^{n_1} y_2^{n_2}\cdots y_k^{n_k}}
{(c; q)_{n_1+n_2+\cdots+n_k}(q; q)_{n_1}(q; q)_{n_2}\cdots (q; q)_{n_k}}\\
=\frac{(a; q)_\infty}{(c, y_1, y_2, \ldots, y_k)_\infty}
{_{k+1}\phi_{k}}\left({{c/a, y_1, y_2, \ldots, y_k}
\atop{0, 0, \ldots, 0}}; q, a\right).
\end{align*}
Equating the above equation and (\ref{mgeneqn1}), we find that
\[
\lambda_{n_1, n_2, \ldots, n_k}=\frac{(a; q)_{n_1+n_2+\cdots+n_k}}
{(c; q)_{n_1+n_2+\cdots+n_k}(q; q)_{n_1}(q; q)_{n_2}\cdots (q; q)_{n_k}}.
\]
Substituting this into (\ref{mgeneqn}), we complete the proof of
Theorem~\ref{mghahn}.
\end{proof}
%%%%%%%%%%%%%%%%%%%%%%%%%%%%%%%%%%%%%%%%%%%%%%%%%%%%%%%%%%%%%%%%%%%%%%%%%%%%%%%%%%%%%%%%%%%%%%%
\section{Another  multilinear generating function for the Hahn polynomials}
%%%%%%%%%%%%%%%%%%%%%%%%%%%%%%%%%%%%%%%%%%%%%%%%%%%%%%%%%%%%%%%%%%%%%%%%%%%%%%%%%%%%%%%%%%%%%%%
Liu \cite[Eq. (4.20)]{Liu2010} obtained the following representation of the  homogeneous
Rogers-Szeg\H{o} polynomials using the techniques of partial fraction.
\begin{prop}\label{liurspp} For any integer $k \ge 0,$  we have the identity
for the homogeneous Rogers-Szeg\H{o} polynomials,
\[
h_k(a, b|q)=\frac{a^k}{(b/a; q)_\infty}
\sum_{n=0}^\infty \frac{q^{n(k+1)}}{(q, qa/b; q)_n}
+\frac{b^k}{(a/b; q)_\infty}
\sum_{n=0}^\infty \frac{q^{n(k+1)}}{(q, qb/a; q)_n}.
\]
\end{prop}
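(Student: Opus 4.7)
My plan is to derive the identity from a Mittag--Leffler partial-fraction expansion of the generating function for $h_k(a,b|q)$, followed by one classical $q$-series transformation to bring the coefficients into the stated form. Start from
\[
\frac{1}{(at, bt; q)_\infty} = \sum_{k=0}^\infty \frac{h_k(a, b|q)}{(q;q)_k} t^k,
\]
and view the left-hand side as a meromorphic function of $t$ with simple poles at $t = q^{-n}/a$ and $t = q^{-n}/b$ for $n \ge 0$. Using the factorizations $(q^{-n};q)_n = (-1)^n q^{-n(n+1)/2}(q;q)_n$ and
\[
(bq^{-n}/a; q)_\infty = (-b/a)^n q^{-n(n+1)/2}(qa/b;q)_n (b/a;q)_\infty,
\]
a direct residue calculation at each pole yields the Mittag--Leffler expansion
\[
\frac{1}{(at, bt; q)_\infty} = \frac{1}{(q;q)_\infty (b/a; q)_\infty}\sum_{n=0}^\infty \frac{(a/b)^n q^{n(n+1)}}{(q,qa/b;q)_n (1 - aq^n t)} + (a \leftrightarrow b),
\]
where the second term collects the residues at the poles $t = q^{-n}/b$.

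Next, I extract the coefficient of $t^k$ from both sides via $1/(1 - aq^n t) = \sum_{k\ge 0}(aq^n t)^k$ to obtain
\[
\frac{h_k(a, b|q)}{(q;q)_k} = \frac{a^k}{(q;q)_\infty(b/a;q)_\infty}\sum_{n=0}^\infty \frac{(a/b)^n q^{n(n+k+1)}}{(q,qa/b;q)_n} + (a \leftrightarrow b).
\]
To bring this to the stated form, I invoke the $q$-series identity
\[
(z;q)_\infty \sum_{n=0}^\infty \frac{z^n}{(q,c;q)_n} = \sum_{n=0}^\infty \frac{(cz)^n q^{n(n-1)}}{(q,c;q)_n},
\]
which is the limit as the first two parameters tend to zero of Heine's third transformation of $_2\phi_1$. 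Specializing $z = q^{k+1}$ and $c = qa/b$ produces
\[
(q^{k+1};q)_\infty \sum_{n=0}^\infty \frac{q^{n(k+1)}}{(q,qa/b;q)_n} = \sum_{n=0}^\infty \frac{(a/b)^n q^{n(n+k+1)}}{(q,qa/b;q)_n}.
\]
Substituting this into the previous display, using $(q^{k+1};q)_\infty = (q;q)_\infty/(q;q)_k$, and multiplying through by $(q;q)_k$ yields the claimed identity.

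The main subtlety is justifying the Mittag--Leffler step itself: one has to rule out any entire correction term. This follows from the standard theta-type estimate that on the quasi-annuli $|t| = q^{-N-1/2}/\max(|a|, |b|)$ between consecutive layers of poles, $|(at, bt; q)_\infty|$ grows roughly like $q^{-N(N+1)}$, so $1/(at, bt; q)_\infty$ decays super-geometrically on these contours. With the expansion justified, termwise extraction of $[t^k]$ is routine, and the Heine transformation resolves the cosmetic mismatch between the residue form and the form asserted in the proposition.
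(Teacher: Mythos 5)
Your proof is correct, and it is worth noting that the paper itself gives no proof of Proposition~\ref{liurspp}: it simply quotes the identity from \cite[Eq.\ (4.20)]{Liu2010}, remarking that it was obtained there by partial-fraction techniques, so your residue/Mittag--Leffler derivation is a self-contained argument in the spirit of the cited source rather than a reproduction of anything in this paper. I checked the key steps: the residue of $1/(at,bt;q)_\infty$ at $t=q^{-n}/a$ equals $-(a/b)^n q^{n^2}\big/\bigl(a\,(q,qa/b;q)_n(q;q)_\infty(b/a;q)_\infty\bigr)$, and since $1/(t-q^{-n}/a)=-aq^n/(1-aq^n t)$ this is exactly your displayed expansion; extracting $[t^k]$ is legitimate for $|t|<1/\max(|a|,|b|)$ because the factor $q^{n(n+1)}$ makes the double series absolutely convergent; the transformation you invoke is indeed the $a,b\to 0$ limit of the Euler-type (iterated Heine) transformation ${}_2\phi_1(a,b;c;q,z)=\frac{(abz/c;q)_\infty}{(z;q)_\infty}\,{}_2\phi_1(c/a,c/b;c;q,abz/c)$, and with $z=q^{k+1}$, $c=qa/b$ the exponent bookkeeping $n(k+2)+n(n-1)=n(n+k+1)$ together with $(q^{k+1};q)_\infty=(q;q)_\infty/(q;q)_k$ lands precisely on the stated formula (I also verified the final identity numerically at $k=0,1$). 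Two points you should make explicit rather than leave implicit: the poles must be simple and the expanding circles must avoid both families of poles, so assume $a/b\notin q^{\mathbb{Z}}$ and perturb the radii if $|a|/|b|$ is an exact half-integral power of $q$ --- this costs nothing, since the stated identity is only meaningful when $a/b\notin q^{\mathbb{Z}}$; and the absence of an entire correction term should be phrased as $\oint_{C_N}F(w)/(w-t)\,dw\to 0$, which your lower bound of super-geometric growth of $|(aw,bw;q)_\infty|$ on the intermediate circles does deliver, since the circumference grows only geometrically. What the paper's route (a citation) buys is brevity; what yours buys is a complete proof from the generating function $\sum_{k\ge 0}h_k(a,b|q)\,t^k/(q;q)_k=1/(at,bt;q)_\infty$, which also makes the $a\leftrightarrow b$ symmetry of the two series transparent.
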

\begin{prop}\label{liursppa}
For $\max\{|au_1|, |bu_1|, \ldots, |au_s|, |bu_s|\}<1,$ we have
\begin{align*}
&\sum_{n_1, n_2, \ldots, n_s=0}^\infty h_{n_1+n_2+\ldots+n_s+k} (a, b|q)
\frac{u_1^{n_1}u_2^{n_2}\cdots u_s^{n_s}}{(q; q)_{n_1}(q; q)_{n_2} \cdots (q; q)_{n_s}}\\
&=\frac{a^k}{(b/a, au_1, au_2, \ldots, au_s; q)_\infty}
{_{s}\phi_{s-1}} \({{au_1, au_2, \ldots, au_s}\atop{qa/b, 0, \ldots, 0}}; q,  q^{1+k}\)\\
&\quad+\frac{b^k}{(a/b, bu_1, bu_2, \ldots, bu_s; q)_\infty}
{_{s}\phi_{s-1}} \({{bu_1, bu_2, \ldots, bu_s}\atop{qb/a, 0, \ldots, 0}}; q,  q^{1+k}\).
\end{align*}
\end{prop}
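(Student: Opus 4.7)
The plan is to insert the partial-fraction representation of Proposition~\ref{liurspp}, applied at index $N=n_1+n_2+\cdots+n_s+k$, into the left-hand side and then interchange the order of summation. That representation decomposes $h_N(a,b|q)$ into two pieces symmetric under $a\leftrightarrow b$, so I will work out the first piece in detail and obtain the second by exchanging $a$ and $b$. The key observation is that the dependence on $N$ factors multiplicatively in the summation indices: $a^{N}=a^{k}\prod_j a^{n_j}$ and $q^{mN}=q^{mk}\prod_j q^{mn_j}$, where $m$ is the auxiliary index appearing in Proposition~\ref{liurspp}. Once the $m$-sum is pulled outside, the remaining multi-sum over $(n_1,\ldots,n_s)$ therefore factors as a product of $s$ independent one-variable sums.

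For each such inner sum I would invoke the $q$-exponential identity (the $a=0$ case of the $q$-binomial theorem),
\[
\sum_{n_j=0}^\infty\frac{(au_jq^m)^{n_j}}{(q;q)_{n_j}}=\frac{1}{(au_jq^m;q)_\infty}=\frac{(au_j;q)_m}{(au_j;q)_\infty}.
\]
Multiplying these $s$ factors together, pulling $\prod_j(au_j;q)_\infty^{-1}$ out of the $m$-sum, and noting that $(0;q)_m=1$ so that the extra zero denominator parameters contribute nothing, the surviving sum in $m$ is precisely
\[
\sum_{m=0}^\infty\frac{(au_1,au_2,\ldots,au_s;q)_m\,q^{m(1+k)}}{(q,qa/b;q)_m}={_s\phi_{s-1}}\!\left({{au_1,au_2,\ldots,au_s}\atop{qa/b,0,\ldots,0}};q,q^{1+k}\right),
\]
which is exactly the first term on the right-hand side. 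Repeating the identical computation with $a$ and $b$ swapped produces the second term, and adding the two yields the stated identity.

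The main technical step is justifying the interchange of the $m$-sum with the multi-sum over $(n_1,\ldots,n_s)$. Under the hypothesis $\max_j\{|au_j|,|bu_j|\}<1$, each inner $q$-exponential converges absolutely, and the factor $q^{m(N+1)}$ supplies geometric decay in $m$; a routine Fubini-type estimate bounds the absolute value of the rearranged first piece by
\[
\frac{|a|^k}{|(b/a;q)_\infty|}\sum_{m=0}^\infty\frac{q^{m(k+1)}}{|(q,qa/b;q)_m|}\prod_{j=1}^s\frac{1}{(|au_j|;q)_\infty},
\]
together with the analogous bound for the $a\leftrightarrow b$ piece, both of which are finite. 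This legitimizes the term-by-term rearrangement and completes the argument.
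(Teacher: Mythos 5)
Your argument is correct, and it runs in the opposite direction from the paper's. The paper starts from the right-hand side, checks it is analytic in $(u_1,\ldots,u_s)$, expands it as a power series with coefficients $c_{n_1,\ldots,n_s}=\partial_{q,u_1}^{n_1}\cdots\partial_{q,u_s}^{n_s}\{f\}\big|_{u_1=\cdots=u_s=0}$, and identifies these coefficients with $h_{k+n_1+\cdots+n_s}(a,b|q)$ by comparing with Proposition~\ref{liurspp}; that is, it reads off the expansion coefficients of the closed form. You instead start from the left-hand side, substitute the partial-fraction formula of Proposition~\ref{liurspp} at index $N=n_1+\cdots+n_s+k$, exchange the auxiliary $m$-sum with the multi-sum, and evaluate each inner sum by Euler's identity $\sum_{n}z^{n}/(q;q)_{n}=1/(z;q)_\infty$ together with $(au_j;q)_\infty=(au_j;q)_m(au_jq^m;q)_\infty$, which reassembles exactly into the two ${}_s\phi_{s-1}$ terms (the zero denominator parameters being inert since $(0;q)_m=1$, and the balancing factor in ${}_s\phi_{s-1}$ being trivial). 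Your Fubini justification is sound: the inner sums are dominated by $\prod_j(|au_j|;q)_\infty^{-1}$ uniformly in $m$, and the remaining $m$-series converges geometrically because $(q,qa/b;q)_m$ stays bounded away from zero (the implicit standing hypothesis that $a/b$ is not an integral power of $q$ is already forced by the denominators in Proposition~\ref{liurspp}). What each approach buys: yours is more elementary and self-contained, needing only the $q$-exponential sum and an absolute-convergence estimate, and it establishes convergence of the left-hand side as part of the proof; the paper's route fits its general theme of coefficient extraction via $q$-derivatives and avoids explicit rearrangement estimates by working with the manifestly analytic right-hand side, at the cost of the $q$-Taylor coefficient computation. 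Both are complete proofs of the proposition.
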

\begin{proof}
If we use  $f(u_1, u_2, \ldots, u_s)$ to denote the right-hand side of the above equation, then,
using the ratio test, we find that $f$ is an analytic function of
$u_1, u_2, \ldots, u_s$ for $\max\{ |au_1|, |bu_1|, \ldots, |au_s|, |bu_s|\}<1.$
Thus, near $(0, 0, \ldots, 0)\in \mathbb{C}^s,$  $f$ can be expanded in terms of $u_1^{n_1} u_2^{n_2}\cdots u_s^{n_s},$
\[
f(u_1, u_2, \ldots, u_s)=\sum_{n_1, n_2,\ldots, n_s=0}^\infty c_{n_1, n_2, \ldots, n_s}
\frac{u_1^{n_1} u_2^{n_2}\cdots u_s^{n_s}}{(q; q)_{n_1}(q; q)_{n_2} \cdots (q; q)_{n_s}}.
\]
By a direct computation, we easily find that the coefficient $c_{n_1, n_2, \ldots, n_s}$ equals
\begin{align*}
{\partial_{q, u_1}^{n_1} \cdots \partial_{q, u_s}^{n_s}}\{f\}
\Big|_{u_1=u_2=\cdots=u_s=0}
=&\frac{a^{k+n_1+n_2+\cdots+n_s}}{(b/a; q)_\infty}
\sum_{n=0}^\infty \frac{q^{n(k+1+n_1+n_2+\cdots+n_s)}}{(q, qa/b; q)_n}\\
&\quad+\frac{b^{k+n_1+n_2+\cdots+n_s}}{(a/b; q)_\infty}
\sum_{n=0}^\infty \frac{q^{n(k+1+n_1+n_2+\cdots+n_s)}}{(q, qb/a; q)_n}.
\end{align*}
Replacing $k$ by $k+n_1+n_2+\cdots+n_s$ in Proposition \ref {liurspp}, we deduce that
\begin{align*}
h_{k+n_1+n_2+\cdots+n_s}(a, b|q)&=\frac{a^{k+n_1+n_2+\cdots+n_s}}{(b/a; q)_\infty}
\sum_{n=0}^\infty \frac{q^{n(k+1+n_1+n_2+\cdots+n_s)}}{(q, qa/b; q)_n}\\
&\quad+\frac{b^{k+n_1+n_2+\cdots+n_s}}{(a/b; q)_\infty}
\sum_{n=0}^\infty \frac{q^{n(k+1+n_1+n_2+\cdots+n_s)}}{(q, qb/a; q)_n}.
\end{align*}
Comparing the above two equations, we find that $c_{n_1, n_2, \ldots, n_s}
=h_{k+n_1+n_2+\cdots+n_s}(a, b|q).$ This completes the proof of the proposition.
\end{proof}
Srivastava and Jain noticed the following multilinear generating function
 for the Hahn polynomials \cite[p. 155]{SrivastavaJain}, and also gave a formal proof
 of the $s=2$ case of this formula using the method of $q$-integration. Next we will
 apply Theorem~\ref{principalliuhthm} to give a complete proof of this formula.

\begin{thm}\label{liusjthm}
 For $\max\{|au_1|, |bu_1|, |av_1|, |bv_1|,  \ldots, |au_s|, |bu_s|, |av_s|, |bv_s|\}<1,$ we have
\begin{align*}
&\sum_{n_1, n_2, \ldots, n_s=0}^\infty h_{n_1+n_2+\cdots+n_s+k} (a, b|q)
\frac{\Phi^{(\alpha_1)}_{n_1}(u_1, v_1|q)\Phi^{(\alpha_2)}_{n_2}(u_2, v_2|q)\cdots \Phi^{(\alpha_s)}_{n_1}(u_s, v_s|q)}
{(q; q)_{n_1}(q; q)_{n_2} \cdots (q; q)_{n_s}}\\
&=\frac{a^k(\alpha_1 au_1, \alpha_2 au_2, \ldots, \alpha_s a u_s; q)_\infty}{(b/a, au_1, av_1, \ldots, au_s, av_s; q)_\infty}
{_{2s}\phi_{2s-1}} \({{au_1, av_1, au_2, av_2, \ldots, au_s, av_s}\atop{qa/b, \alpha_1 au_1, \ldots, \alpha_s a u_s,  0, \ldots, 0}}; q,  q^{1+k}\)\\
&\quad+\frac{b^k(\alpha_1 bu_1, \alpha_2 bu_2, \ldots, \alpha_s b u_s; q)_\infty}{(a/b, bu_1, bv_1, \ldots, bu_s, bv_s; q)_\infty}
{_{2s}\phi_{2s-1}} \({{bu_1, bv_1, bu_2, bv_2,  \ldots, bu_s, bv_s}\atop{qb/a, \alpha_1 bu_1, \ldots, \alpha_s b u_s,  0, \ldots, 0}}; q,  q^{1+k}\).
\end{align*}
\end{thm}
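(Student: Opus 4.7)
The plan is to mirror the strategy used in the proof of Theorem~\ref{mghahn}. Let $F(u_1, v_1, \ldots, u_s, v_s)$ denote the right-hand side of the claimed identity. Under the hypothesis $\max\{|au_j|, |bu_j|, |av_j|, |bv_j|\}<1$, the ratio test applied to each of the two ${_{2s}\phi_{2s-1}}$ series shows that $F$ is analytic in each variable separately on a polydisc about the origin, and by Hartogs' theorem (Theorem~\ref{hartogthm}) $F$ is then jointly analytic near $(0,\ldots,0) \in \mathbb{C}^{2s}$.

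Next, for each $j = 1, 2, \ldots, s$, I would verify by direct computation that $F$ satisfies the system of $q$-partial differential equations
\[
\partial_{q, u_j}\{F\} = \partial_{q, v_j}(1-\alpha_j \eta_{u_j})\{F\}.
\]
Each of the two summands comprising $F$ has exactly the same structural shape as the right-hand side of Theorem~\ref{mghahn} --- a prefactor of $q$-shifted factorials multiplying a basic hypergeometric series --- so I would expect each summand to satisfy the system separately. The verification rests on the standard $q$-derivative identities
\[
\partial_{q, x}\{(ax; q)_\infty\} = -a (aqx; q)_\infty, \qquad \partial_{q, x}\Big\{\frac{1}{(ax; q)_\infty}\Big\} = \frac{a}{(ax; q)_\infty},
\]
together with the observation that the combination $\partial_{q, v_j}(1 - \alpha_j \eta_{u_j})$ produces a factor of the form $(1 - \alpha_j au_j q^n)$ inside the series which matches the one produced on the other side of the identity by $\partial_{q, u_j}$.

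With these two ingredients, Theorem~\ref{principalliuhthm} furnishes a family $\{\lambda_{n_1, \ldots, n_s}\}$ of constants, independent of the $u_j, v_j$, such that
\[
F(u_1, v_1, \ldots, u_s, v_s) = \sum_{n_1, \ldots, n_s=0}^\infty \lambda_{n_1, \ldots, n_s} \prod_{j=1}^s \Phi^{(\alpha_j)}_{n_j}(u_j, v_j|q).
\]
To pin down $\lambda_{n_1, \ldots, n_s}$, I would set $u_1 = u_2 = \cdots = u_s = 0$. Since $\Phi^{(\alpha_j)}_{n_j}(0, v_j|q) = v_j^{n_j}$, and since setting each $u_j = 0$ makes both the numerator parameter $au_j$ and the denominator parameter $\alpha_j au_j$ (respectively $bu_j$ and $\alpha_j bu_j$) vanish, the identities $(0;q)_\infty = 1$ and $(0;q)_n = 1$ cause the ${_{2s}\phi_{2s-1}}$ series to collapse to an ${_s\phi_{s-1}}$ series in the variables $av_1, \ldots, av_s$ (respectively $bv_1, \ldots, bv_s$). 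The two resulting terms are precisely the right-hand side of Proposition~\ref{liursppa} with its $u_j$ replaced by the present $v_j$, whence
\[
\sum_{n_1, \ldots, n_s=0}^\infty \lambda_{n_1, \ldots, n_s} v_1^{n_1} \cdots v_s^{n_s} = \sum_{n_1, \ldots, n_s=0}^\infty h_{n_1+\cdots+n_s+k}(a,b|q) \frac{v_1^{n_1} \cdots v_s^{n_s}}{(q;q)_{n_1} \cdots (q;q)_{n_s}},
\]
and equating coefficients of $v_1^{n_1} \cdots v_s^{n_s}$ yields $\lambda_{n_1, \ldots, n_s} = h_{n_1+\cdots+n_s+k}(a,b|q) / \prod_{j=1}^s (q;q)_{n_j}$, which is exactly what Theorem~\ref{liusjthm} claims.

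The main obstacle I anticipate is the direct verification of the $q$-partial differential equation for $F$ in the second step. The structural parallel with Theorem~\ref{mghahn} is reassuring, but $F$ is a sum of two terms rather than one, each with roughly twice as many base parameters per index $j$ as in Theorem~\ref{mghahn}, and the shifted arguments $au_j, av_j, \alpha_j au_j$ (and their $b$-analogues) appearing both in the prefactors and inside the series must be tracked with some care. Everything else --- the analyticity, the application of Theorem~\ref{principalliuhthm}, and the coefficient identification via Proposition~\ref{liursppa} --- should then follow routinely.
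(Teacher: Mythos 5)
Your proposal is correct and follows essentially the same route as the paper: denote the right-hand side by $F$, check analyticity and the system $\partial_{q,u_j}\{F\}=\partial_{q,v_j}(1-\alpha_j\eta_{u_j})\{F\}$ by direct computation, invoke Theorem~\ref{principalliuhthm}, and then determine the coefficients by setting $u_1=\cdots=u_s=0$ and comparing with Proposition~\ref{liursppa}. The only difference is cosmetic — the paper likewise verifies the $q$-partial differential equations by a direct (displayed) computation, exactly the step you flag as the main remaining work.
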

\begin{proof}
If we use  $f(u_1, v_1, \ldots, u_s, v_s)$ to denote the right-hand side of the above equation, then,
using the ratio test, we find that $f$ is an analytic function of $u_1, v_1, u_2, v_2, \ldots, u_s, v_s$
for
\[
\max\{|au_1|, |bu_1|, |av_1|, |bv_1|,  \ldots, |au_s|, |bu_s|, |av_s|, |bv_s|\}<1.
\]
By a direct computation, we deduce that for $j=1, 2\ldots, s$,
\begin{align*}
&\partial_{q, u_j} \{f\}=\partial_{q, v_j}(1-\alpha_j \eta_{u_j})\{f\}\\
&=\frac{(1-\alpha)a^{k+1}(\alpha_1 au_1, \ldots, q\alpha_j au_j, \ldots, \alpha_s a u_s; q)_\infty}{(b/a, au_1, av_1, \ldots, au_s, av_s; q)_\infty}\\
&\quad\times
{_{2s}\phi_{2s-1}} \({{au_1, av_1, au_2, av_2, \ldots, au_s, av_s}\atop{qa/b, \alpha_1 au_1, \ldots, q\alpha_j a u_j, \ldots, \alpha_s a u_s,  0, \ldots, 0}}; q,  q^{2+k}\)\\
&\quad+\frac{(1-\alpha)b^{k+1}(\alpha_1 bu_1, \ldots, q\alpha_j bu_j, \ldots, \alpha_s b u_s; q)_\infty}{(a/b, bu_1, bv_1, \ldots, bu_s, bv_s; q)_\infty}\\
&\qquad \times {_{2s}\phi_{2s-1}} \({{bu_1, bv_1, bu_2, bv_2,  \ldots, bu_s, bv_s}
\atop{qb/a, \alpha_1 bu_1, \ldots, q\alpha_j b u_j, \ldots, \alpha_s b u_s,  0, \ldots, 0}}; q,  q^{2+k}\).
\end{align*}
Thus,  by Theorem~\ref{principalliuhthm},  there exists a sequence $\{\lambda_{n_1, n_2, \ldots, n_k}\}$ independent
of $u_1, v_1, \ldots, u_s, v_s$ such that
\begin{equation*}
f=\sum_{n_1, n_2, \ldots, n_s=0}^\infty \lambda_{n_1, n_2, \ldots, n_s}
\Phi^{(\alpha_1)}_{n_1}(u_1, v_1|q) \Phi^{(\alpha_2)}_{n_2}(u_2, v_2|q)\cdots \Phi^{(\alpha_s)}_{n_k}(u_s, v_s|q).
\end{equation*}
Setting $u_1=u_2=\cdots=u_s=0$ in the above equation and using Proposition~\ref{liursppa} , we find that
\begin{align*}
&\sum_{n_1, n_2, \ldots, n_s=0}^\infty \lambda_{n_1, n_2, \ldots, n_s}v_1^{n_1}v_2^{n_2} \cdots v_s^{n_s}\\
&=\sum_{n_1, n_2, \ldots, n_s=0}^\infty h_{n_1+n_2+\ldots+n_s+k} (a, b|q)
\frac{v_1^{n_1}v_2^{n_2}\cdots v_s^{n_s}}{(q; q)_{n_1}(q; q)_{n_2} \cdots (q; q)_{n_s}}.
\end{align*}
Equating the coefficients of $v_1^{n_1}v_2^{n_2} \cdots v_s^{n_s}$ in the both sides of the above equation, we have
\[
\lambda_{n_1, n_2, \ldots, n_s}=\frac{h_{n_1+n_2+\ldots+n_s+k} (a, b|q)}{(q; q)_{n_1}(q; q)_{n_2} \cdots (q; q)_{n_s}}.
\]
Hence, we complete the proof of Theorem~\ref{liusjthm}.
\end{proof}

%/////////////////////////////////////////////////////////////////////


\begin{thebibliography}{9}

\bibitem{SalamCarlitz} W. A. Al-Salam and  L. Carlitz,
Some orthogonal $q$-polynomials, Math. Nachr. 30
(1965), 47--61.


\bibitem{SalamVerma} W. A. Al-Salam and A. Verma,
Some remarks on $q$-beta integral, Proc. Amer. Math. Soc. 85 (1982)
360--362.
%\bibitem{Andrews 1974}G. E. Andrews,
%Applications of basic hypergeometric series, SIAM Rev. 16 (1974), 441--484.



\bibitem{Andrews1972}  G. E. Andrews,
Summations and transformations for basic Appell series, J. London Math. Soc. (2) 4
(1972) 618--622.

\bibitem{Andrews+Askey} G. E. Andrews and R. Askey,
Another $q$-extension of the beta function, Proc. Amer. Math. Soc. 81 (1981) 97--100.

%\bibitem{Andrews+Lewis+Liu}
% G. E. Andrews, R.P. Lewis, and Z.-G. Liu,
 %An identity relating a theta function to a sum of Lambert series,
 %Bull. London. Math. Soc. 33 (2001), 25--31.


 %\bibitem{Askey+Ismail}R. Askey and M.E.H. Ismail,
%The very well-poised $_6\psi_6$, Proc. Amer. Math. Soc. 77 (1979),  218--222.


 %\bibitem {Askey} R. Askey,
%The very well-poised $_6\psi_6$ II, Proc. Amer. Math. Soc. 90 (1984), 575--579.

%\bibitem{Ask+Wil} R. Askey and J. Wilson,
%Some basic hypergeometric orthogonal polynomials that generalize
%Jacobi polynomials, Memoirs. Amer. Math. Soc. 54, No. 319 (1985).

%\bibitem{Askey1987}
%R. Askey, Beta integrals and $q$-extensions, in:
%Proceedings of the Ramanujan Centennial International Conference, Annamalainagar,
%15-18 December 1987, pp. 85--102.

%\bibitem{Atakishiyev}N. M.  Atakishiyev,
%On the Askey--Wilson $q$-beta integral. Teoret. Mat. Fiz. 99 (1994), 155--159.


%\bibitem{Bailey}W.N. Bailey, Series of hypergeometric type which are infinite in both directions,
 %Quart. J. Math. (Oxford)  7 (1936), 105--115.

%\bibitem{Berg+Ismail} C. Berg and M. E. H. Ismail,
%$q$-Hermite polynomials and classical orthogonal polynomials,
%Can. J. Math. 48 (1996), 43--63.


%\bibitem{BCYY} Berndt, B. C.; Chan, S. H.;  Yeap, B. P.;   Yee, A.J.:
%A reciprocity theorem for certain $q$-series found in Ramanujan's
%lost notebook,  Ramanujan J. 13 (2007) 29--40.

%\bibitem{Boas} Boas, R. P.:
 %Tannery¡¯s theorem, Math. Mag. 38 (1965) 66.
% \bibitem{Bowman}D. Bowman,
 %An Easy Proof of the Askey--Wilson Integral and Applications of the Method,
 %J. Math. Anal. Appl. 245 (2000), 560--569.




\bibitem{Carlitz} L. Carlitz, Some polynomials related to theta functions,
Ann. Mat. Pure  Appl. 41 (1955) 359--373.

%\bibitem{Cao2009}Cao, J. :
 %New proofs of generating functions for Rogers-Szeg\H{o} polynomials, Appl. Math. Comput.
 %207 (2009) 486--492.

 \bibitem{Cao2010}J.  Cao,
  Notes on Carlitz's $q$-operators,  Taiwanese J. Math.  14 (2010) 2229--2244.

 \bibitem{Carlitz1972}L.  Carlitz,
 Generating functions for certain $q$-orthogonal polynomials, Collectanea Math. 23(1972) 91--104.

%\bibitem{Car61}  H. Cartan,  Th\'eorie  \'el\'ementaire des fonctions analytiques d'une ou plusieurs variables
%complexes, Enseignement des sciences, Hermann, Paris, 1961.

%\bibitem{CL1}Chen, W. Y. C.; Liu,Z.-G.:  Parameter augmentation for
%basic hypergeometric series, I, Mathematical Essays in honor of
%Gian-Carlo Rota (edited by B. E. Sagan and R. P. Stanley) ,
%Birk\"{a}user, 1998, 111--129.


%\bibitem{CL2}Chen, W. Y. C.;  Liu, Z.-G.:  Parameter augmentation for
%basic hypergeometric series, II, J. Combin. Theory Ser. A  80
%(1997) 175--195.



%\bibitem{Chu} W. Chu, Bailey's very well-poised $_6\psi_6$ series
%identity, J. Combin. Theory Ser. A {\bf 113} (2006), 966--979.


%\bibitem{Chu+Ma} W. Chu and X. Ma,
%Bailey's well-poised $_6\psi_6-$series implies the Askey--Wilson integral,
%J. Combin. Theory Ser. A 118 (2011), 240--247.

%\bibitem{ChuZhang}Chu, W.; Zhang, W.:
 %Bilateral $q$-series identities and reciprocal formulae. Funct. Approx. Comment. Math. 42 (2010) 153--162.



%\bibitem{Fine} N. J. Fine,  Basic hypergeometric series and applications. With a foreword by George
% E. Andrews. Mathematical Surveys and Monographs, 27. American Mathematical Society, Providence, RI, 1988.

%\bibitem{GIS}K. Garrett,  M. E. H. Ismail and D. Stanton,
%Variants of the Rogers-Ramanujan identities, Adv. in Appl. Math. 23 (1999), 274--299.


\bibitem{Gas+Rah}G. Gasper and  M. Rahman,
Basic Hypergeometric Series, Cambridge  Univ. Press, Cambridge,
MA, 2004.

%\bibitem{Ism+Sta+Vie} M. E. H. Ismail,  D. Stanton, and G. Viennot,
 %The combinatorics of $q$-Hermite polynomials and the Askey--Wilson
 %integral, Europ. J. Combin. 8 (1987), 379--392.

%\bibitem{Ism+Sta} M. E. H. Ismail and D. Stanton,
 %On the Askey--Wilson and Rogers polynomials, Canad. J. Math. 40
 %(1988), 1025--1045.

\bibitem{Hahn1} W. Hahn,
 \"{U}ber Orthogonalpolynome, die $q$-Differenzengleichungen, Math. Nuchr. 2
(1949) 4--34.
\bibitem{Hahn2} W. Hahn,
 Beitr\"{a}ge zur Theorie der Heineschen Reihen; Die $24$ Integrale der hypergeometrischen
$q$-Differenzengleichung; Das $q$-Analogon der Laplace-Transformation,
Math. Nachr. 2 (1949) 340--379.

%\bibitem{ISV}M. E. H. Ismail, D. Stanton, and G. Viennot,
%The combinatorics of $q$-Hermite polynomials and the
%Askey--Wilson integral, Europ. J. Combin. 8 (1987) 379--392.

%\bibitem{Ism} M. E. H. Ismail,
%The Askey--Wilson Operator and Summation Theorems,
%Contemp. Math., 190, Amer. Math. Soc., Providence, RI,
 %1995, 171--178.

 %\bibitem{IsmailRS} Ismail, M.; Rahman, M.; Suslov, K.:
%Some summation theorems and transformations for
%$q$-series, Can. J. Math. 49 (1997) 543--567.


% \bibitem{Kal+Mil} E. G. Kalnins and W. Miller,
 % Symmetry techniques for $q$-series: Askey--Wilson polynomials,
  %Rocky Mountain J. Math. 19 (1989),  233--240.

%\bibitem{Kang}  Kang, S. Y.:
%Generalizations of Ramanujan's reciprocity theorem and their
%applications, J. London Math. Soc. (2) 75 (2007) 18--34.




%\bibitem{Jouhet+Schlosser}
%F. Jouhet, M. Schlosser, Another proof of Bailey's  $_6\psi_6$ summation,
%Aequationes Math., 70 (2005), 43--50.


\bibitem {Liu1997} Z.-G. Liu,
$q$-Hermite polynomials and a $q$-beta integral, Northeast. Math. J.
13 (1997) 361--366.


%\bibitem{Liu2003} Liu, Z.-G.:
%Some operator identities and $q$-series transformation formulas,
%Disc. Math. 265 (2003) 119--139.


%\bibitem{Liu2009}Liu, Z.-G.:
%An identity of Andrews and the Askey--Wilson integral,  Ramanujan J. 19 (2009) 115--119.


\bibitem{Liu2010} Z.-G. Liu,  Two $q$-difference equations and $q$-operator
identities,  J. Difference Equ. Appl. 16 (2010) 1293--1307.

%\bibitem{Liu2011} Liu, Z.-G.: An extension of the non-terminating ${}_6\phi_5$
%summation and the Askey-Wilson polynomials,
% J. Difference Equ. Appl. 17 (2011) 1401--1411.

%\bibitem{Liu2013} Z.-G. Liu,
%A $q$-series expansion formula and the Askey-Wilson polynomials, Ramanujan J.
%30 (2013) 193-210.

\bibitem{LiuRamanujanP} Z.-G. Liu,
On the $q$-partial differential equations and $q$-series, The Legacy of Srinivasa Ramanujan, 213-250,
Ramanujan Math. Soc. Lect. Notes Ser., 20, Ramanujan Math. Soc., Mysore, 2013.




%\bibitem{McLaughlinS}
%McLaughlin, J.; Sills, A. V.:  Rogers-Ramanujan-Slater type identities, Electronic
%J. Combin. 15 (2008) 59 pp.

\bibitem{Malgrange} B. Malgrange,
Lectures on functions of several complex variables, Springer-Verlag, Berlin, 1984.


\bibitem{Range} R. M. Range,
Complex analysis: A brief tour into higher dimensions, Am. Math. Monthly 110
(2003) 89--108.


%\bibitem{Rah} M. Rahman,
%A simple evaluation of Askey and Wilson's integral, Proc. Amer.
%Math. Soc. 92 (1984), 413--417.



%\bibitem{Rahman+Suslov} M. Rahman and S. K. Suslov,
%A unified approach to the summation and integration formulas for $q$-hypergeometric functions,
%III. Methods Appl. Anal. 5 (1998), 413--424.

%\bibitem{Ramanujan} S. Ramanujan, Notebooks, Tata Institute of Fundamental Research, Bombay, 1957.

%\bibitem{Rogers1893}  Rogers, L. J.:
%On a three-fold symmetry in the elements of Heine's series, Proc.
%London. Math. Soc. 24 (1893)171--179.

%\bibitem{Schlosser}
%M. Schlosser, A simple proof of Bailey's very well-poised $_6\psi_6$ summation,
%Proc. Amer. Math. Soc. 130 (2002), 1113--1123.

%\bibitem{Slater}L.J. Slater, L. Lakin,
%Two proofs of the $_6\psi_6$ summation theorem, Proc. Edinburgh Math. Soc.
%(2) 9 (1953--1957), 116-121.
\bibitem{SrivastavaJain} H. M. Srivastava and V. K. Jain,
 Some multilinear generating functions for $q$-Hermite polynomials,
 J. Math Anal. Appl. 144 (1989) 147--157.


%\bibitem{Szeg} Szeg\H{o}, G.:
 %Ein Betrag zur Theorie der Thetafunktionen, Sitz. Preuss. Akad.
 %Wiss. Phys. Math. 19 (1926) 242--252.

%\bibitem{Warnaar}
%S. Ole Warnaar, Ramanujan's $_1\psi_1$ summation,
%Notices of  Amer. Math. Soc. 60 (2013), 18--22.


%\bibitem{Zeng}
%J. Zeng,
%The $q$-variations of Sylvester's bijection between odd and strict partitions,
%Ramanujan J. 9 (2005), no. 3, 289--303.
\bibitem{Taylor}J. Taylor,
Several Complex Variables with Connections to Algebraic Geometry and Lie Groups,
Graduate Studies in Mathematics, vol. 46. Am. Math. Soc., Providence,  2002.

\bibitem{Wang2009} M. Wang,
 $q$-integral representation of the Al-Salam-Carlitz polynomials,
  Appl. Math. Lett.  22  (2009) 943--945.


\end{thebibliography}
\end{document}